\documentclass[11pt,english,a4paper]{smfart}
\usepackage[english]{babel}
\usepackage{amssymb,xspace}
\usepackage{amstext}
\usepackage{smfthm}
\theoremstyle{plain}
\usepackage{amsbsy,amssymb,amsfonts,latexsym}

\selectlanguage{english}

\marginparwidth=10 true mm
\oddsidemargin=0 true mm
\evensidemargin=0 true mm
\marginparsep=5 true mm
\topmargin=0 true mm
\headheight=8 true mm
\headsep=4 true mm
\topskip=0 true mm
\footskip=15 true mm

\setlength{\textwidth}{150 true mm}
\setlength{\textheight}{220 true mm}
\setlength{\hoffset}{8 true mm}
\setlength{\voffset}{2 true mm}


\raggedbottom
\usepackage{enumerate}
\usepackage{graphics}

\date{\today}

\title[An example of a minimal action of $\F^{+}_{2}$ on the Hilbert space]{An example of a minimal action of the free semi-group $\F^{+}_{2}$ on the Hilbert space}

\author{Sophie Grivaux}
\address{CNRS,
Laboratoire Paul Painlev\' e, UMR 8524, Universit\'e  Lille 1, Cit\' e Scientifique, 59655 Villeneuve d'Ascq
Cedex, France}
\email{grivaux@math.univ-lille1.fr}

\author{Maria Roginskaya}
\address{Department of Mathematical Sciences,
Chalmers University of Technology, SE-41296 G\"oteborg, Sweden, \emph{and}
Department of Mathematical Sciences,
G\"oteborg University, SE-41296 G\"oteborg, Sweden}
\email{maria@chalmers.se}

\subjclass{47A15, 47A16, 37B05}

\keywords{Invariant Subspace and Invariant Subset Problems
on Hilbert spaces, hypercyclic vectors, orbits
of linear operators, Read's type operators, minimal action of groups on a Hilbert space}


\def\D{\ensuremath{\mathbb D}}

\def\T{\ensuremath{\mathbb T}}

\def\C{\ensuremath{\mathbb C}}

\def\N{\ensuremath{\mathbb N}}

\def\F{\ensuremath{\mathbb F}}

\DeclareMathOperator{\vect}{span}

\newcommand{\sep}{separable}

\newcommand{\hy}{hypercyclic}

\newcommand{\ops}{operators}
\newcommand{\op}{operator}

\newcommand{\nz}{non-zero}

\newcommand{\pss}[2]{\ensuremath{{\langle #1,#2\rangle}}}


\newtheorem{theorem}{Theorem}[section]

\newtheorem{lemma}[theorem]{Lemma}

\newtheorem{proposition}[theorem]{Proposition}

\newtheorem{corollary}[theorem]{Corollary}

{\theoremstyle{definition}}

{\theoremstyle{definition}}

{\theoremstyle{definition}}

{\theoremstyle{definition}}

{\theoremstyle{definition}}

{\theoremstyle{definition}\newtheorem*{FFC Criterion}{Frequent
Faber-hypercyclicity Criterion}}
\newtheorem*{Hypercyclicity Criterion}{Hypercyclicity Criterion}
{\theoremstyle{definition}\newtheorem*{GS Criterion}{Godefroy-Shapiro
Criterion}}
\def\piednote#1{\let\oldfn=\thefootnote\def\thefootnote{}\footnote{\noindent#1}%
\addtocounter{footnote}{-1}\def\thefootnote{\oldfn}}

\def\wh{\widehat}
\usepackage[all]{xy}
\entrymodifiers={+!!<0pt,\fontdimen22\textfont2>}

\begin{document}

\begin{abstract}
The Invariant Subset Problem on the Hilbert space is to know whether there exists a bounded linear operator $T$ on a separable infinite-dimensional Hilbert space $H$ such that the orbit $\{T^{n}x;\ n\ge 0\}$ of every non-zero vector $x\in H$ under the action of $T$ is dense in $H$.
We show that there exists a bounded linear operator $T$ on a complex separable infinite-dimensional Hilbert space $H$ and a unitary operator $V$ on $H$, such that the following property holds true: for every non-zero vector $x\in H$, either $x$ or $Vx$ has a dense orbit under the action of $T$. As a consequence, we obtain in particular that there exists a minimal action of the free semi-group with two generators $\F^{+}_{2}$ on a complex separable infinite-dimensional Hilbert space $H$.
\end{abstract}
\maketitle
\section{Introduction}\label{Intro}
Let $H$ be a complex separable infinite-dimensional Hilbert space, and $T\in\mathcal{B}(H)$ a bounded linear operator on $H$. Our present work is motivated by the well-known Invariant Subspace and Subset Problems, which can be easily stated as follows: if $T\in\mathcal{B}(H)$, does there always exist a closed subspace $M$ of $H$ (resp. a closed subset $F$ of $H$), non-trivial in the sense that it is distinct from $\{0\}$ and $H$, and which is invariant under the action of $T$? It is not difficult to see that an \op\ $T$ on $H$ has no non-trivial invariant closed subspace (resp. subset) if and only if for every non-zero vector $x\in H$ the linear span of the orbit $\textrm{Orb}(x,T)=\{T^{n}x;\ n\ge 0\}$ of $x$ under the action of $T$ (resp. the orbit $\textrm{Orb}(x,T)$ itself) is dense in $H$. The Invariant Subset Problem can thus be reformulated as follows: does there exist a minimal action of the semi-group $\N$ on the Hilbert space $H$?
\par\smallskip 
 Of course the Invariant Subspace and Subset Problems make sense on general separable Banach spaces as well, and in this setting both problems admit a negative answer. Enflo \cite{En} and Read \cite{R1} constructed examples of infinite-dimensional \sep\ Banach spaces $X$ and of bounded operators $T$ on $X$ without non-trivial invariant closed subspaces. Read then refined his constructions in several papers such as \cite{R2}, where he gave an example of an operator on $\ell_{1}$ without non-trivial invariant closed subspaces, \cite{R3} which exhibited an example of an operator on $\ell_{1}$ without non-trivial invariant closed subsets, and \cite{R4} which gives examples of operators without non-trivial invariant subspaces on $c_{0}$ or on the $\ell_{2}$-direct sum of countably many copies of the James space $J$. All these counterexamples are constructed on non-reflexive spaces. Both the Invariant Subspace and Subset Problems remain open on reflexive spaces, and on the Hilbert space in particular. We refer the reader to the survey \cite{CE}, the book \cite{RR} or to the recent book \cite{CP} for more information. These references are mainly concerned with results in the positive direction, i.e. conditions under which an operator does admit a non-trivial invariant subspace (or subset).
\par\medskip 
At present, operators on the Hilbert space which seem ``closest'' to having no non-trivial invariant closed subset are the ones constructed in the paper \cite{GR}. 
 We call such operators \emph{Read's type operators} because they are constructed by adapting part of the techniques employed by Read in his various constructions to the Hilbert space setting.
 In order to state the properties of these operators which will be needed in this paper, let us recall that
a vector $x$ of $H$ is called \emph{hypercyclic} for an \op\ $T$ if the orbit $\textrm{Orb}(x,T)$ is dense in $H$. We denote by $HC(T)$ the set of such vectors. Then $T$ has no non-trivial invariant closed subset if and only if $HC(T)^{c}=\{0\}$. The operators constructed in \cite{GR} have the property that $HC(T)^{c}$ is ``small'', in the sense that it is contained in a countable union of closed hyperplanes of $H$.
\par\medskip 
Our main result in this paper is the following:
\begin{theorem}\label{Th1}
 There exists a bounded linear operator $T$ acting on a complex separable infinite-dimensional Hilbert space $H$, and a unitary operator $V$ on $H$, such that the following property holds true:
for every non-zero vector $x$ of $ H$, either $x$ or $Vx$ has a dense orbit under the action of $T$. 
\end{theorem}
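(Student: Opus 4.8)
The plan is to start from the Read's type operators constructed in \cite{GR} and to modify the construction so that a suitable unitary $V$ can be adjoined to it. Writing $B=HC(T)^{c}$ for the set of non-hypercyclic vectors, the conclusion of Theorem \ref{Th1} says exactly that there is no non-zero $x$ with both $x\in B$ and $Vx\in B$, i.e. that $B\cap V^{-1}B=\{0\}$. By \cite{GR} one may take $T$ with $B$ contained in a countable union $\bigcup_{n}M_{n}$ of closed hyperplanes. Any non-zero $x$ with $x,Vx\in B$ then satisfies $x\in M_{n}$ and $x\in V^{-1}M_{m}$ for some $m,n$, hence lies in one of the codimension-two intersections $M_{n}\cap V^{-1}M_{m}$. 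It therefore suffices to arrange that
\[
\bigl(M_{n}\cap V^{-1}M_{m}\bigr)\setminus\{0\}\subseteq HC(T)\qquad\text{for all }m,n,
\]
that is, to force every non-zero vector lying in one of these slices to be hypercyclic for $T$.

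Before constructing anything I would record why the conclusion of \cite{GR} alone is not enough, since this locates the real difficulty. For a \emph{fixed} operator $T$ one cannot hope to choose $V$ afterwards: whatever the unitary $V$, the subspaces $M_{n}$ and $V^{-1}M_{m}$ meet in codimension at most two, hence in an infinite-dimensional subspace, and if $B$ exhausted the hyperplanes $M_{n}$ these intersections would meet $B$ in non-zero vectors. The same codimension count shows that no purely generic choice of $V$ can succeed, because in an infinite-dimensional space a subspace of any fixed finite codimension is still non-trivial. Consequently $V$ must be built into the construction of $T$ from the outset, and the non-hypercyclic set $B$ must be made genuinely thin \emph{relative to} $V$, so as to avoid all of the slices $M_{n}\cap V^{-1}M_{m}$.

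The construction would then follow the inductive Read's type scheme of \cite{GR}: one fixes a basis $(e_{i})$ and a dense sequence of target vectors, and builds $T$ over successive working intervals separated by \loi s, carrying prescribed vectors close to the targets while keeping the norm of $T$ under control. Here I would fix at the very start an explicit unitary $V$ (for instance a bilateral weighted shift, or a rotation on a doubled copy $K\oplus K$ of the space, chosen with sufficiently ``irrational'' spectral data), and enlarge the family of vectors whose orbits are forced to be dense so that it covers, up to scalars, the codimension-two subspaces $M_{n}\cap V^{-1}M_{m}$. The outcome would be that for every non-zero $x$ at least one of $x$, $Vx$ lands in the set of vectors with dense $T$-orbit. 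The minimal action of $\F^{+}_{2}$ announced in the abstract is then obtained for free by letting the two generators act as $T$ and $TV$: the orbit of any $x$ contains both $\{T^{n}x;\ n\ge 0\}$ and $\{T^{n}(Vx);\ n\ge 1\}$, so density of either of these makes the whole $\F^{+}_{2}$-orbit of $x$ dense.

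The step I expect to be the main obstacle is precisely the norm control in this enlarged construction. Imposing density requirements simultaneously for $x$ and for $Vx$ roughly doubles the constraints that each working interval must meet, and the fixed unitary $V$ mixes coordinates in a way that the original bookkeeping of \cite{GR} was not designed to absorb. Keeping $T$ bounded while satisfying all of these constraints at once—so that the slices $M_{n}\cap V^{-1}M_{m}$ really do consist of hypercyclic vectors—is where the quantitative heart of the argument will lie, and it is the point at which the interplay between $T$ and $V$ has to be managed most carefully.
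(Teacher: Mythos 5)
Your reformulation of the goal as $HC(T)^{c}\cap V^{-1}HC(T)^{c}=\{0\}$ is correct, but what follows is a program rather than a proof: you explicitly leave open the ``quantitative heart'' of your plan, namely keeping $T$ bounded while forcing every non-zero vector of the slices $M_{n}\cap V^{-1}M_{m}$ to be hypercyclic. This is not a routine bookkeeping issue that one can defer. Each such slice is an infinite-dimensional (in particular uncountable) set of vectors, so one cannot ``enlarge the family of vectors whose orbits are forced to be dense'' by adding constraints vector by vector to the working intervals of the \cite{GR} induction; in that construction the smallness of the exceptional set emerges from structural properties of the orbit closures (properties (P1), (P2)), not from an enumeration of vectors whose orbits are steered to targets. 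Forcing hypercyclicity of all non-zero vectors of a prescribed infinite-dimensional subspace is essentially a problem of the same nature as the one we started from, and your sketch gives no mechanism for it. As written, Theorem \ref{Th1} is therefore not proved.

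Moreover, the premise that drove you to rebuild the construction --- that ``$V$ must be built into the construction of $T$ from the outset'' --- is false, and this is precisely where the actual proof diverges. Your impossibility argument uses only the coarse description of $HC(T)^{c}$ as a subset of a countable union of closed \emph{hyperplanes}, whence the codimension-two count for $M_{n}\cap V^{-1}M_{m}$. But a finer structure theorem holds: a refinement of Lomonosov's inequality shows that $HC(T)^{c}$ is a dense linear subspace (Proposition \ref{Prop4}), and then properties (P1), (P2), (P') yield $HC(T)^{c}=\bigcup_{n\ge 0}M_{n}$ for an increasing sequence of closed subspaces each of \emph{infinite} codimension, indeed of infinite codimension in the next one (Proposition \ref{Prop6}). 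Once the exceptional set is known to be this thin, the codimension obstruction disappears, and pure Hilbert-space geometry finishes the job: the filtration $(M_{n})$ can be carried by a unitary onto any abstract filtration with the same properties (Proposition \ref{Prop7}), and in $H^{2}(\D,\ell_{2})$ one exhibits two such filtrations with trivial mutual intersection --- direct sums of the polynomial spaces $G_{n}=\vect[1,z,\dots,z^{n}]$ on one hand, and of the Blaschke-product spaces $K_{n}=B_{n}H^{2}(\D)$ on the other, transversal because every non-zero element of $K_{n}$ has infinitely many zeroes (Lemma \ref{Lem8}). Conjugating by the two resulting unitaries produces $V$ \emph{after} $T$ is fixed, with no modification of the Read-type induction and no norm control at all. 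In short: the missing idea is the structure theorem for $HC(T)^{c}$, which converts the problem you located in the hard analysis of the construction into a soft geometric one.
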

In other words, $HC(T)\cup HC(V^{-1}TV)=H\setminus \{0\}$. In particular, $\bigcup_{n\ge 0}T^{n}\{x,Vx\}$ is dense in $H$ for every non-zero vector $x\in H$.
\par\smallskip 
This theorem has the following interesting consequence: denote by $\F^{+}_{2}$ the free semi-group with two generators. It can be seen as the set of all finite sequences 
$(\omega _{0},\dots,\omega _{n})$, $n\ge 0$, of zeroes and ones, where the group operation is given by concatenation: 
\[
\textrm{if}\ \ \omega =(\omega _{0},\dots,\omega _{n})\ \ \textrm{and}\ \ \theta =(\theta _{0},\dots,\theta _{p})\ \ \textrm{then}\ \ \omega \,.\,\theta =(\omega _{0},\dots,\omega _{n},\theta _{0},\dots,\theta _{p}).
\]
If $T_{0}$ and $T_{1}$ are two bounded linear operators on $H$, one can define an action $\rho $ of $\F^{+}_{2}$ on $H$ in the following way:
\[
\xymatrix@C=50pt@R=2pt{\hphantom{aaaa}\rho :\F^{+}_{2}\times H\ar[r]&H\hphantom{aaaaaaaa}\\
\bigl( (\omega _{0},\dots,\omega _{n}),x\bigr)\ar@{|->}[r]&T_{\omega _{0}}\dots T_{\omega _{n}}x.}
\]
With the notation above, $\rho (\omega \, .\, \theta ,x)=T_{\omega _{0}}\dots T_{\omega _{n}}T_{\theta _{0}}\dots T_{\theta _{p}}x=\rho (\omega ,\rho (\theta ,x))$. Such an action is called \emph{minimal} if the orbit of every non-zero vector $x$ of $ H$, that is the set 
$\{\rho (\omega ,x),\ \omega \in \F^{+}_{2}\}$, is dense in $H$.
\par\medskip 
If $T$ and $V$ are the operators given by Theorem \ref{Th1} above, define two operators $T_{0}$ and $T_{1}$ on $H$ by setting $T_{0}=T$ and $T_{1}=V^{-1}TV$. Let $\rho $ be the associated action of $\F^{+}_{2}$ on $H$. For every $x\in H\setminus \{0\}$, the orbit $\{\rho (\omega ,x),\ \omega \in \F^{+}_{2}\}$ contains the sets 
$\{T^{n}_{0}x,\ n\ge 0\}$ and $\{T^{n}_{1}x,\ n\ge 0\}$. Since one of these two sets is dense in $H$, we obtain in particular that $\rho $ is minimal.
\begin{corollary}\label{Cor2}
 There exists a minimal action of the free semi-group with two gene\-rators $\F^{+}_{2}$ on a complex separable infinite-dimensional Hilbert space $H$.
\end{corollary}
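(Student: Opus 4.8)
The plan is to deduce the statement directly from Theorem~\ref{Th1}, into which all the difficulty has already been packed. Let $T$ and $V$ be the operators furnished by that theorem, with $V$ unitary, and set $T_{0}=T$ and $T_{1}=V^{-1}TV$. These two bounded operators generate an action $\rho$ of $\F^{+}_{2}$ on $H$ as described above, and I would prove minimality by exhibiting, for each non-zero $x\in H$, a single generator whose repeated application already yields a dense orbit. To that end, first note the elementary fact that the full semi-group orbit $\{\rho(\omega,x):\omega\in\F^{+}_{2}\}$ contains the two ``pure'' orbits $\{T_{0}^{n}x:n\ge 0\}$ and $\{T_{1}^{n}x:n\ge 0\}$, obtained by letting $\omega$ range over the constant words. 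It therefore suffices to show that for every $x\neq 0$ at least one of these two pure orbits is dense in $H$.

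The second step is the conjugation identity $T_{1}^{n}x=V^{-1}T^{n}Vx$, immediate from $T_{1}=V^{-1}TV$. Since $V$ is unitary, hence a homeomorphism of $H$, the orbit $\{T_{1}^{n}x:n\ge 0\}=V^{-1}\,\textrm{Orb}(Vx,T)$ is dense in $H$ if and only if $\textrm{Orb}(Vx,T)$ is dense, that is, if and only if $Vx\in HC(T)$. Together with the tautology that $\{T_{0}^{n}x:n\ge 0\}$ is dense exactly when $x\in HC(T)$, the dichotomy of Theorem~\ref{Th1}---namely $x\in HC(T)$ or $Vx\in HC(T)$ for every non-zero $x$---says precisely that one of the two pure orbits is dense. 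Hence the action $\rho$ is minimal, which is the assertion of the corollary.

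The point I want to stress is that there is no genuine obstacle at this stage: the corollary is a purely formal consequence of Theorem~\ref{Th1}, and the entire weight of the argument---constructing a Read's type operator $T$ together with a unitary $V$ realising the identity $HC(T)\cup HC(V^{-1}TV)=H\setminus\{0\}$---lies in the proof of that theorem. The only things to verify here are the two bookkeeping facts above, both of which are immediate once $V$ is known to be unitary; in particular, unitarity of $V$ is used solely to transport density through the homeomorphism $V$, and no further property of the construction is needed.
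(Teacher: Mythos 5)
Your proof is correct and follows essentially the same route as the paper: set $T_{0}=T$, $T_{1}=V^{-1}TV$, observe that the $\F^{+}_{2}$-orbit of $x$ contains both pure orbits, and invoke the dichotomy of Theorem~\ref{Th1} (your extra remark that unitarity of $V$ transports density through the conjugation $T_{1}^{n}x=V^{-1}T^{n}Vx$ is exactly the implicit step behind the paper's reformulation $HC(T)\cup HC(V^{-1}TV)=H\setminus\{0\}$). Nothing is missing.
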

The operator $T$ of Theorem \ref{Th1} is a Read's type operator. Such operators are non-invertible, and this seems to be inherent to the construction. So it is not clear whether it is possible to construct a minimal action of the free group $\F_{2}$ on a Hilbert space. But if we replace the operators $T_{0}=T$ and $T_{1}=V^{-1}TV$ above by operators of the form $S_{0}=\alpha I+T$ and $S_{1}=\alpha I+V^{-1}TV$, where $\alpha \in\C$ is so large that $\alpha I+T$ is invertible, then for every non-zero vector $x\in H$ the linear span of one of the two sets $\{S^{n}_{0}x,\ n\ge 0\}$ and $\{S^{n}_{1}x,\ n\ge 0\}$ is dense in $H$.
So we obtain in the same way an action $\rho $ of $\F_{2}$ on $H$ such  that for every  non-zero vector $x$ of $ H$, the linear span of the orbit $\{\rho (\omega ,x),\ \omega \in \F_{2}\}$ of $x$ under the action of $\F_{2}$ by $\rho $ is dense in $H$.
\par\smallskip 
Minimal affine isometric actions of groups on an infinite-dimensional Hilbert space are studied in the paper \cite{CTV}, and it is proved here in particular that there exists a minimal isometric action of the free groupe with three generators $\mathbb{F}_{3}$ on an infinite-dimensional Hilbert space.
\par\smallskip
The proof of Theorem \ref{Th1} uses some fine properties of the operators constructed in \cite{GR}. After recalling these briefly in Section \ref{Sec2} we show, using a slightly modified version of the Lomonosov inequality of \cite{L}, that these Read's type operators on the Hilbert space do have non-trivial invariant closed subspaces, and we give a rather precise description of their set of non-hypercyclic vectors. We prove Theorem \ref{Th1} in Section \ref{Sec3}, using this description as well as a construction of specific increasing sequences of subspaces in the vector-valued $H^{2}$-space $H^{2}(\D,\ell^{2})$. We also observe there that Theorem \ref{Th1} yields an example of a pair of two unitarily equivalent operators on 
$H$ which generate $\mathcal{B}(H)$ in the strong (or weak) topology.

\section{Main properties of Read's type operators on the Hilbert space}\label{Sec2}

The main properties of the \ops\ constructed in \cite{GR} which will be of interest to us here are summarized in the following theorem:

\begin{theorem}\label{Th3}
Let $H$ be a separable infinite-dimensional Hilbert space. There exist bounded \ops\ $T$ on $H$ having the following three properties:
\begin{enumerate}
 \item [\emph{(P1)}] for every vector $x\in H$, the closure of the orbit $\textrm{Orb}(x,T)$ of $x$ under the action of $T$ is a subspace. In other words, the closures of $\{T^{n}x,\ n\ge 0\}$ and $\vect \{T^{n}x,\ n\ge 0\}$ coincide;
\item[\emph{(P2)}] the family $(\overline{\textrm{Orb}}(x,T))_{x\in H}$ of all the closures of the orbits of $T$ is totally ordered by inclusion: for every pair $(x,y)$ of vectors of $H$, either  $\overline{\textrm{Orb}}(x,T)\subseteq\overline{\textrm{Orb}}(y,T)$ or $\overline{\textrm{Orb}}(y,T)\subseteq\overline{\textrm{Orb}}(x,T)$;
\item[\emph{(P')}] the \op\ $T_{|M}$ induced by $T$ on any of its invariant subspaces $M$ is \hy, i.e.\ there exists a vector $x\in M$ such that $\overline{\textrm{Orb}}(x,T)=M$. 
\end{enumerate}
\end{theorem}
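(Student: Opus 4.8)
The plan is to take for $T$ one of the operators constructed in \cite{GR} and to read the three properties off a single, sufficiently precise, description of its lattice of closed invariant subspaces. Writing $M_x := \overline{\vect}\,\textrm{Orb}(x,T)$ for the smallest closed invariant subspace containing a vector $x$, the description I would aim for is that the closed invariant subspaces of $T$ are exactly the $M_x$; that the family $(M_x)_{x \in H}$ is totally ordered by inclusion; and that each $x$ is \emph{already} hypercyclic for the restriction $T_{|M_x}$. Granting this, the theorem follows at once: since $\overline{\textrm{Orb}}(x,T) \subseteq M_x$ always, hypercyclicity of $x$ for $T_{|M_x}$ forces $\overline{\textrm{Orb}}(x,T) = M_x$, which is (P1); the total ordering of the $M_x$ is (P2); and as every closed invariant subspace is some $M_x$, its restriction is hypercyclic, which is (P'). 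Thus the whole theorem reduces to establishing this description, with (P1) and (P') being two facets of the assertion that every vector is hypercyclic for the restriction of $T$ to the cyclic subspace it generates.

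A first, soft, reduction comes from the property recalled in the introduction, namely that $HC(T)^{c}$ is contained in a countable union $\bigcup_{k} V_{k}$ of closed hyperplanes of $H$. For any closed invariant subspace $M \ne \{0\}$ one then has $M \setminus HC(T) \subseteq \bigcup_{k} (M \cap V_{k})$, and the Baire category theorem applied in the complete metric space $M$ yields a dichotomy: either $M$ meets $HC(T)$, in which case a genuinely hypercyclic vector $x \in M$ gives $H = \overline{\textrm{Orb}}(x,T) \subseteq M$ and hence $M = H$; or $M$ is contained in a single hyperplane $V_{k}$. Consequently every closed invariant subspace is either $H$ itself or lies inside one of the exceptional hyperplanes, and it is these ``small'' invariant subspaces that carry all the remaining content.

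The heart of the matter, and the step I expect to be the main obstacle, is to produce these proper invariant subspaces and to identify them with the $M_x$. Here I would use the modified Lomonosov inequality of \cite{L}, applied both to $T$ and to its restrictions: the inequality furnishes nonzero vectors and functionals satisfying an estimate of quasi-eigenvector type, which on the one hand guarantees that $HC(T)^{c} \ne \{0\}$, so that non-trivial invariant subspaces genuinely exist and (P') is not vacuous, and on the other hand pins the orbit closures down precisely enough to exhibit each proper invariant subspace as an orbit closure $\overline{\textrm{Orb}}(x,T) = M_x$ and to see that these are nested. The delicate point throughout is the adaptation of Lomonosov's inequality to the strongly non-normal Read-type setting, and the extraction from it of information sharp enough to \emph{locate all} invariant subspaces rather than merely to prove that one of them exists; in particular, forcing each vector to be hypercyclic for its own cyclic subspace, and not just producing some hypercyclic vector there, is what makes the description—and hence the simultaneous proof of (P1) and (P')—genuinely hard.
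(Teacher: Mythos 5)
There is a genuine gap, and it sits exactly where you yourself flag the ``main obstacle'': the description of the invariant subspace lattice is never established, and the tool you propose for it cannot supply it. The modified Lomonosov inequality (Theorem~\ref{th0}, from \cite{L}) produces, for a given ball $Q$, a pair $x^{**}\neq 0$, $x^{*}\in Q$ with $|\pss{x^{**}}{A^{*}x^{*}}|\le C_{Q}\,\|A\|_{e}$ for all $A$ in the algebra. This is an \emph{obstruction} to hypercyclicity: in the paper it is used (via Corollary~\ref{cor}, after writing $T=S_{0}+\textrm{compact}$ with $S_{0}$ power-bounded, so that $\|T^{n}\|_{e}\le 1$) only to show that $HC(T)^{c}$ is dense, i.e.\ Proposition~\ref{Prop4}. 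It yields no lower bounds on orbits, no mechanism forcing an orbit closure to be \emph{linear}, and no comparability between two given invariant subspaces; your phrase ``pins the orbit closures down precisely enough'' is precisely where a proof is missing, and none could be inserted: total ordering of orbit closures and linearity of orbit closures fail for generic operators having a dense set of non-hypercyclic vectors, so no general functional-analytic estimate of Lomonosov type can imply them. Properties (P1) and (P2) are in fact the main theorems of \cite{GR}, proved there by the construction itself --- the choice of $Tg_{j}$ on working and lay-off intervals is what makes $\overline{\textrm{Orb}}(x,T)$ coincide with $\overline{\vect\,\textrm{Orb}(x,T)}$ and makes these closures nested. The hyperplane-covering fact you use in your ``soft reduction'' is likewise an \emph{output} of the analysis in \cite{GR}, not an available input; and your Baire dichotomy, though correct as stated, only shows that a proper closed invariant subspace lies inside one hyperplane $V_{k}$, which says nothing about its being an orbit closure, nor about (P1) or (P2).

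Note also that the paper handles the statement in the opposite logical order from your plan: (P1) and (P2) are quoted from \cite{GR}, and (P') is then derived as an easy consequence of (P1) and (P2) (see \cite[Section 5.1]{GR}); the Lomonosov inequality enters only later, and in the reverse direction, to prove the existence of many \emph{non}-hypercyclic vectors. Your reduction of the theorem to ``every closed invariant subspace is an $M_{x}$, the $M_{x}$ are totally ordered, and each $x$ is hypercyclic for $T_{|M_{x}}$'' is logically sound and indeed equivalent to (P1)+(P2)+(P'), but it inverts the actual dependence: (P') is the derived property, while (P1) and (P2) must come from the fine structure of the construction and cannot be recovered from soft arguments.
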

Property (P') is an easy consequence of properties (P1) and (P2) (see \cite[Section 5.1]{GR}), but we state it explicitly here as it will be needed in the sequel. Another observation which will be important is the following: if $(g_{j})_{j\ge 0}$ denotes an orthonormal basis of $H$, the \ops\ $T$ of \cite{GR} which are constructed starting from this basis have the form $T=S+K$, where $S$ is a forward weighted shift with respect to the basis $(g_{j})_{j\ge 0}$ and $K$ is a nuclear \op: $Sg_{j}=w_{j}\, g_{j+1}$ where $0\le w_{j}\le 2$ for all $j$ and $w_{j}$ is either $0$ or very close to $1$ as $j$ tends to infinity, and $\sum_{j\ge 0}||Kg_{j}||$ can be made as small as we wish. This kind of property is common to all Read's type operators: in the definition of the vectors $Tg_{j}$ for $j\ge 0$, the set of all nonnegative integers $j$ is partitioned into two types of intervals, lay-off intervals and working intervals. Working intervals are separated by very long lay-off intervals. When $j$ is not the right endpoint of either a working interval or a lay-off interval, $Tg_{j}$ is defined as $Tg_{j}=w_{j}g_{j+1}$, where the weight $w_{j}$ is chosen extremely close to $1$ when $j$ gets very large. This gives the shift part $S$ of the \op\ $T$. When $T$ is a right endpoint of a lay-off or a working interval, $||Tg_{j}||$ is extremely small and can be chosen to decrease very quickly as $j$ grows. This gives the nuclear part $K$ of the \op, and $T=S+K$. A consequence of this observation is that such \ops\ $T$ do have non-trivial invariant subspaces, although, as will be recalled shortly afterwards, their sets of non-\hy\ vectors are very small.

\begin{proposition}\label{Prop4}
 If $T$ is one of the operators of \cite{GR} acting on a complex separable infinite-dimensional Hilbert space $H$, $HC(T)^{c}$ is a dense linear subspace of $H$, and $T$ has non-trivial invariant closed subspaces.
\end{proposition}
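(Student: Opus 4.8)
The plan is to derive both assertions from properties (P1), (P2), (P') of Theorem~\ref{Th3}, combined with a Lomonosov-type argument that exploits the decomposition $T=S+K$ with $K$ nuclear. First I would record the elementary reductions. A vector lies in $HC(T)^{c}$ exactly when $\overline{\textrm{Orb}}(x,T)\neq H$, which by (P1) means that $\overline{\textrm{Orb}}(x,T)$ is a \emph{proper} closed $T$-invariant subspace; in particular $HC(T)^{c}$ is stable under $T$, since $\overline{\textrm{Orb}}(Tx,T)\subseteq\overline{\textrm{Orb}}(x,T)$, and it contains $0$. It is moreover a linear subspace: given $x,y\in HC(T)^{c}$, property (P2) allows us to assume $\overline{\textrm{Orb}}(x,T)\subseteq\overline{\textrm{Orb}}(y,T)=:M$, and by (P1) $M$ is a proper closed $T$-invariant subspace containing both $x$ and $y$; hence $\alpha x+\beta y\in M$ and $\textrm{Orb}(\alpha x+\beta y,T)\subseteq M$ for all $\alpha,\beta\in\C$, so that $\overline{\textrm{Orb}}(\alpha x+\beta y,T)\subseteq M\neq H$ and $\alpha x+\beta y\in HC(T)^{c}$. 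Observe also that a \emph{single} non-zero vector $x\in HC(T)^{c}$ already yields a non-trivial invariant closed subspace, namely $\overline{\textrm{Orb}}(x,T)$; thus the second assertion follows as soon as $HC(T)^{c}\neq\{0\}$, and the genuine content of the proposition is the \emph{density} of $HC(T)^{c}$.

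The heart of the argument is therefore to manufacture non-hypercyclic vectors, and to manufacture enough of them. Here I would use $T=S+K$, where $S$ is the forward weighted shift $Sg_{j}=w_{j}g_{j+1}$ (with $0\le w_{j}\le 2$, and $w_{j}$ equal to $0$ or close to $1$ for large $j$) and $K$ is nuclear. The goal, following Lomonosov, is to produce a non-zero pair $(x,f)\in H\times H$ with $\langle T^{n}x,f\rangle=0$ for all $n\ge 0$, i.e. with $f\perp\textrm{Orb}(x,T)$: such an $x$ is non-zero and non-hypercyclic. The modified Lomonosov inequality of \cite{L} should deliver such a pair once the essential spectral behaviour of $T$ is under control, and this is precisely where nuclearity of $K$ enters: since $T^{n}-S^{n}$ is compact for every $n$, one has $\|T^{n}\|_{\mathrm{ess}}=\|S^{n}\|_{\mathrm{ess}}$ and $\rho_{\mathrm{ess}}(T)=\rho_{\mathrm{ess}}(S)$, and the latter is governed by the tail weights, which are either $0$ or close to $1$. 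Controlling these essential quantities is what should force the relevant inner products to vanish and produce the annihilating pair $(x,f)$.

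To upgrade existence to density I would argue by contradiction. If $HC(T)^{c}$ were not dense, its closure $N:=\overline{HC(T)^{c}}$ would be a proper closed $T$-invariant subspace, and by (P') the induced operator $T_{|N}$ would be hypercyclic, so that in fact $HC(T)^{c}=N$. One then considers the operator $\widetilde{T}$ induced by $T$ on the quotient $H/N$. Since passing to a quotient by an invariant subspace does not enlarge the essential spectral radius, $\rho_{\mathrm{ess}}(\widetilde{T})\le\rho_{\mathrm{ess}}(T)$, so $\widetilde{T}$ still satisfies the hypotheses of the inequality; it therefore possesses a non-zero non-hypercyclic vector, which lifts to a non-hypercyclic vector of $T$ lying outside $N$. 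This contradicts $HC(T)^{c}\subseteq N$, and density follows.

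I expect the main obstacle to be establishing the modified Lomonosov inequality in exactly the form required and verifying its hypotheses for $T=S+K$: one must extract from the weights (using that they are either $0$ or tend to $1$) enough control on $\|T^{n}\|_{\mathrm{ess}}=\|S^{n}\|_{\mathrm{ess}}$ to force $\langle T^{n}x,f\rangle$ to vanish along the whole orbit, and one must confirm that the quotient operator $\widetilde{T}$ genuinely falls within the scope of the inequality so that the density bootstrap is legitimate. Once a single non-zero non-hypercyclic vector is in hand, the existence of a non-trivial invariant closed subspace is immediate from $\overline{\textrm{Orb}}(x,T)$, which completes the proof of both assertions.
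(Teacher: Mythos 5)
Your architecture is essentially the paper's: linearity of $HC(T)^{c}$ via (P1)--(P2) (your argument is exactly the paper's, phrased through the totally ordered family of orbit closures), non-hypercyclic vectors via a Lomonosov-type inequality applied to $T$ viewed as a compact perturbation of a shift, and the invariant-subspace conclusion from one non-zero non-hypercyclic vector together with (P1). Where you genuinely diverge is the density step. The paper's refinement of the Lomonosov inequality (Theorem \ref{th0}) is \emph{local}: for every closed ball $Q$ of positive radius not containing $0$, after ruling out alternative (A2) by commutativity of the algebra generated by $T^{*}$, it produces a vector $y\in Q$ with $\sup_{n\ge 0}|\pss{x}{T^{n}y}|\le C_{Q}M$, so density of $HC(T)^{c}$ comes in one stroke (Corollary \ref{cor}), with no bootstrap. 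Your quotient bootstrap is a viable alternative: if $N=\overline{HC(T)^{c}}$ were proper, then $N$ is closed and $T$-invariant, $HC(T)^{c}=N$ (no need for (P') here: every vector of a proper closed invariant subspace is trivially non-hypercyclic, and hypercyclicity of $T_{|N}$ \emph{within} $N$ is irrelevant), and a non-zero non-hypercyclic vector for the quotient operator $\widetilde{T}$ lifts to a non-hypercyclic vector outside $N$, since the quotient map sends dense orbits to dense orbits. This route has the merit of needing only an unlocalized inequality, at the price of extra verifications the paper's version avoids.

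Two of your steps need repair. First, the Lomonosov machinery does \emph{not} produce an annihilating pair: you cannot ask for $\pss{T^{n}x}{f}=0$, i.e.\ $f\perp\textrm{Orb}(x,T)$; what (A1) yields is boundedness, $\sup_{n\ge 0}|\pss{f}{T^{n}y}|\le C_{Q}\sup_{n\ge 0}\norm{T^{n}}_{e}$, and boundedness suffices, since a dense orbit has unbounded inner products against any fixed non-zero $f$. Second --- and this is where your justification as written would fail --- the hypothesis that must pass to the quotient is essential power-boundedness $\sup_{n\ge 0}\norm{T^{n}}_{e}<\infty$, not the essential spectral radius: $\rho_{\mathrm{ess}}(\widetilde{T})\le 1$ is compatible with $\norm{\widetilde{T}^{n}}_{e}\to\infty$, in which case the inequality gives no uniform bound along the orbit. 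The correct fact is available: identifying $H/N$ with $N^{\perp}$, one has $\widetilde{T}^{n}=P_{N^{\perp}}T^{n}|_{N^{\perp}}$ (using $TN\subseteq N$), whence $\norm{\widetilde{T}^{n}}_{e}\le\norm{T^{n}}_{e}$ for every $n$; note that Corollary \ref{cor} as stated (compact perturbation of a power-bounded operator) does \emph{not} obviously pass to quotients, since the shift part need not leave $N$ invariant, so you must run its proof with essential norms. Relatedly, to bound $\norm{T^{n}}_{e}$ you should not work with $S$ itself, which is not obviously power-bounded (its non-zero weights are merely close to $1$): the paper's device is to introduce the comparison shift $S_{0}$ with weights exactly $0$ or $1$, so that $S-S_{0}$ is compact, $T-S_{0}$ is compact, and $\norm{T^{n}}_{e}=\norm{S_{0}^{n}}_{e}\le 1$. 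Finally, the inequality has a hypothesis you nowhere verify: one must either rule out alternative (A2) of Theorem \ref{th0}, or know the weakly closed algebra generated by $T^{*}$ (resp.\ $\widetilde{T}^{*}$) is proper; in both cases commutativity of that algebra is the argument, and it should be said.
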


The proof of Proposition \ref{Prop4} relies on a refinement of the Lomonosov inequality proven in \cite{L}. This inequality motivates the well-known Lomonosov conjecture that any adjoint \op\ acting on a complex dual separable Banach space $X^{*}$ has a non-trivial invariant closed subspace. It is stated as follows:
\par\smallskip 
Let $X$ be a complex separable Banach space, and let $\mathcal{A}$ be a weakly closed sub-algebra of $\mathcal{B}(X)$ with $\mathcal{A}\neq\mathcal{B}(X)$. Then there exist two non-zero vectors $x^{**}\in X^{**} $ and $x^{*}\in X^{*}$ such that for every $A\in \mathcal{A}$,
$$
|\pss{x^{**}}{A^{*}x^{*}}|\le||A||_{e},
$$
where $||A||_{e}$ denotes the essential norm of $A$, i.e.\ the distance of $A$ to the space of compact operators on $X$.
\par\smallskip 
Here is the slightly stronger statement which will be needed in the present paper. It is completely contained in the proof of \cite[Th. $1$]{L}, and so we will only say a few words about it.

\begin{theorem}\label{th0}
 Let $X$ be a complex separable Banach space, and let $\mathcal{A}$ be a uniformly closed subalgebra of $\mathcal{B}(X)$.
 Let $Q$ be a closed ball of $X^{*}$ of positive radius, which does not contain the point $0$.
 Then we have the following alternative: 
 either
 
(A1)  there exist a positive constant $C_{Q}$ depending only on $Q$, a vector $x^{*}\in Q$ and a \nz\ vector 
 $x^{**}\in X^{**}$ such that 
\begin{eqnarray*}\label{star}
|\pss{x^{**}}{A^{*}x^{*}}|\le C_{Q}\,||A||_{e}\quad \textrm{ for every } A\in \mathcal{A}
\end{eqnarray*}

or

(A2)  the set $\{A^{*}x^{*} \textrm{ ; } A\in \mathcal{A}\}$ is dense in $X^{*}$ for every vector $x^{*}\in Q$, and in this case there exists an \op\ $A_{0}^{*}$ on $X$ belonging to $\mathcal{A}$, distinct from the identity \op, and such that $A_{0}^{*}x_{0}^{*}=x_{0}^{*}$ for some vector $x_{0}^{*}$ in $Q$.
\end{theorem}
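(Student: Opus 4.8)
The plan is to extract the statement from Lomonosov's fixed-point scheme in \cite{L}, organising it around the weak-$*$ compact convex ball $Q$ so that the alternative and the constant $C_{Q}$ come out explicitly. The governing dichotomy is whether the orbit $\mathcal{A}^{*}x^{*}=\{A^{*}x^{*};\ A\in\mathcal{A}\}$ is dense in $X^{*}$ for every $x^{*}\in Q$. First I would dispose of the degenerate branch: if density fails for some $x^{*}_{1}\in Q$, then, $\mathcal{A}$ being an algebra, $\mathcal{A}^{*}x^{*}_{1}$ is a linear subspace whose closure is a \emph{proper} closed subspace of $X^{*}$; by Hahn--Banach there is a \nz\ $x^{**}\in X^{**}$ vanishing on it, so that $|\pss{x^{**}}{A^{*}x^{*}_{1}}|=0\le C_{Q}\,\|A\|_{e}$ for every $A\in\mathcal{A}$ and every $C_{Q}>0$, which is exactly (A1). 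All the content is thus concentrated in the opposite branch, where $\mathcal{A}^{*}x^{*}$ is dense for every $x^{*}\in Q$; there I must either manufacture the fixed point required by (A2) or fall back onto the inequality (A1).

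In that branch I would run a Schauder--Tychonoff argument directly inside $Q$. Write $Q=\{x^{*}\in X^{*};\ \|x^{*}-\xi^{*}\|\le r\}$ with $0\notin Q$; the ball $Q$ is convex and weak-$*$ compact, which is precisely what makes a fixed-point theorem available. For each $x^{*}\in Q$, density provides $A\in\mathcal{A}$ with $A^{*}x^{*}$ as close to the centre $\xi^{*}$ as one wishes, hence well inside $Q$. The obstruction to gluing these local choices into a \emph{continuous} self-map of $Q$ is that the sets on which a fixed $A$ keeps $A^{*}y^{*}$ near $\xi^{*}$ are norm-open but not weak-$*$ open. This is exactly where the essential norm enters: writing $A=K+R$ with $K$ compact and $\|R\|$ close to $\|A\|_{e}$, the adjoint $K^{*}$ is weak-$*$-to-norm continuous on the bounded set $Q$, so choices in which the compact part dominates do yield weak-$*$ continuity. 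Using the weak-$*$ compactness of $Q$ I would extract a finite subcover, take a subordinate partition of unity $(\varphi_{i})_{i}$ and form the weak-$*$ continuous map
\[
\Phi(x^{*})=\sum_{i}\varphi_{i}(x^{*})\,A_{i}^{*}x^{*},\qquad x^{*}\in Q,
\]
which sends $Q$ into $Q$ by convexity. A fixed point $x_{0}^{*}=\Phi(x_{0}^{*})$ then reads $A_{0}^{*}x_{0}^{*}=x_{0}^{*}$ with $A_{0}=\sum_{i}\varphi_{i}(x_{0}^{*})A_{i}\in\mathcal{A}$, and $x_{0}^{*}\neq 0$ because $0\notin Q$, giving the fixed point of (A2).

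The heart of the proof, and the step I expect to be the main obstacle, is the quantitative \emph{win--win} that makes this work: either the compact parts can be made to dominate at every scale, in which case the construction above closes up and yields (A2); or they cannot, and the resulting lower bound on $\|A\|_{e}$ relative to the displacement $\|A^{*}x^{*}-\xi^{*}\|$ is repackaged, through a weak-$*$ cluster point in $X^{**}$ and a Hahn--Banach separation, into a \nz\ $x^{**}\in X^{**}$ and an $x^{*}\in Q$ satisfying $|\pss{x^{**}}{A^{*}x^{*}}|\le C_{Q}\,\|A\|_{e}$ for all $A\in\mathcal{A}$, that is (A1); here the constant $C_{Q}$ comes out of the geometry of $Q$ (essentially from $r$ and $\|\xi^{*}\|$). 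The remaining delicate point is to guarantee $A_{0}\neq I$: this is arranged by choosing the covering data so that the local operators $A_{i}$ genuinely move the relevant functionals (none of them being the identity) and so that their convex combination at $x_{0}^{*}$ cannot collapse to $I$. Since every one of these steps is already carried out inside the proof of \cite[Th.~$1$]{L}, the argument for Theorem~\ref{th0} amounts to reading off this alternative from Lomonosov's construction with $Q$ fixed throughout.
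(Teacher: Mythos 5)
Your overall architecture is the right one, and it is in fact the same as the paper's: the paper proves Theorem~\ref{th0} simply by observing that the statement is contained in the proof of \cite[Lemma~8]{L}, rerun with a ball $Q$ of radius $r$ centred at $y^{*}$, $0<r<\|y^{*}\|$, which produces the constant $C_{Q}=2(\|y^{*}\|+r)$; your sketch reconstructs the interior of that argument (non-dense orbit gives (A1) trivially via Hahn--Banach; in the dense branch, a partition-of-unity/Schauder--Tychonoff scheme on the weak-$*$ compact convex set $Q$). But the pivotal step of the reconstruction is left as a black box, and as you describe it it would not close. Norm-density of the orbits $\{A^{*}x^{*}\,;\ A\in\mathcal{A}\}$ alone gives no control whatsoever on $\|A\|_{e}$ for the approximants, so nothing in your dense branch forces ``the compact part to dominate''. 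What actually makes the covering argument work is the negation of (A1), not density: since $\mathcal{A}$ is a linear space and $\|\cdot\|_{e}$ is a seminorm on it, the set $W_{t}(x^{*})=\{A^{*}x^{*}\,;\ A\in\mathcal{A},\ \|A\|_{e}\le t\}$ is convex and balanced for every $t>0$, and if its norm closure were a proper subset of $X^{*}$, Hahn--Banach separation would yield a \nz\ $x^{**}$ bounded on it, hence --- after rescaling $x^{**}$, which is harmless because the inequality in (A1) is homogeneous in $x^{**}$ --- exactly (A1). Consequently, when (A1) fails, for every $x^{*}\in Q$ and every $t>0$ there exist $A\in\mathcal{A}$ with $\|A\|_{e}\le t$ and $A^{*}x^{*}$ as close as desired to the centre of $Q$; these are the operators $A_{i}$ with which the weak-$*$ open cover and the map $\Phi$ can be built. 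Your phrase ``repackaged, through a weak-$*$ cluster point in $X^{**}$ and a Hahn--Banach separation'' gestures at this, but note that the functional arises by separating a norm-closed convex balanced subset of $X^{*}$, not as a weak-$*$ cluster point, and that this separation step is precisely where the geometry of $Q$ and the constant $C_{Q}$ enter.

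The second genuine gap is your treatment of $A_{0}\neq I$. Choosing the covering data so that the $A_{i}$ ``genuinely move the relevant functionals'' and so that their convex combination ``cannot collapse to $I$'' is not an argument: a convex combination of non-identity operators can perfectly well equal the identity, and at the fixed point the combination does fix $x_{0}^{*}$, so no ``moving'' obstruction is available there. The correct (and standard) mechanism is once more the essential norm, available exactly once the previous point is repaired: all the $A_{i}$ are chosen with $\|A_{i}\|_{e}\le t<1$, so $\|A_{0}\|_{e}\le\max_{i}\|A_{i}\|_{e}<1=\|I\|_{e}$ in the infinite-dimensional setting, whence $A_{0}\neq I$; the same bound makes the essential spectral radius of $A_{0}^{*}$ less than $1$, so that $1$ is an isolated eigenvalue of finite multiplicity of $A_{0}^{*}$, which is the refinement of (A2) that the paper quotes from \cite[Lemma~8]{L}. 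With these two repairs your sketch becomes a faithful rendering of Lomonosov's proof --- which is all the paper itself invokes, its only added content being the observation that the argument runs verbatim for an arbitrary ball $Q$ avoiding $0$.
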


\begin{proof}
The proof of Lemma $8$ of \cite{L} gives the following:
if $y^{*}\in X^{*}$ is such that $||y^{*}||=3$, and if $Q$ denotes the ball of radius $2$ centered at $y^{*}$, then either there exist a vector $x^{*}\in Q$ and a \nz\ element $x^{**}$ of $X^{**}$ such that
\begin{eqnarray*}
|\pss{x^{**}}{A^{*}x^{*}}|\le 10\,||A||_{e}\quad \textrm{ for every } A\in \mathcal{A},
\end{eqnarray*}
  or the set $\{A^{*}x^{*} \textrm{ ; } A\in \mathcal{A}\}$ is dense in $X^{*}$ for every vector $x^{*}\in Q$, and in this case there exists an element $A_{0}$ of $\mathcal{A}$ such that  $A_{0}^{*}x_{0}^{*}=x_{0}^{*}$ for some vector $x_{0}^{*}$ in $Q$, and $1$ is an eigenvalue of $A_{0}^{*}$ of finite multiplicity which is an isolated point in the spectrum of $A_{0}^{*}$. 
Hence (A2) is satified when $Q$ is the closed ball centered at $y^{*}$ of radius $2$

Exactly the same proof shows that if $y^{*}\in X^{*}$ is \nz\ and $Q$ is the ball of radius $r$ centered at $y^{*}$, with $0<r<||y^{*}||$, then either there exist a vector $x^{*}\in Q$ and a \nz\ element $x^{**}$ of $X^{**}$ such that
\begin{eqnarray*}
|\pss{x^{**}}{A^{*}x^{*}}|\le 2(||y^{*}||+r)\,||A||_{e}\quad \textrm{ for every } A\in \mathcal{A},
\end{eqnarray*}
or (A2) holds true. This is the statement of Theorem \ref{th0}.
\end{proof}

Let us now mention an important consequence of Theorem \ref{th0}, which is a slight generalization of a result of \cite{JPK}:

\begin{corollary}\label{cor}
Suppose that $T$ is a bounded \op\ on a complex separable Hilbert space which is a compact perturbation of a power-bounded \op\ on $H$. Then $T$  has a dense set of non-\hy\ vectors.
\end{corollary}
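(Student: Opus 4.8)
The plan is to derive the corollary from the Lomonosov dichotomy of Theorem \ref{th0}; the only feature of $T$ that I would use is that the essential norms of its powers stay bounded. Writing $T=R+K$ with $R$ power-bounded, say $||R^{n}||\le M$ for all $n$, and $K$ compact, the difference $T^{n}-R^{n}$ is compact (every term in its expansion contains a factor $K$), so $||T^{n}||_{e}=||R^{n}||_{e}\le M$ for all $n\ge 0$. I would record at once that this bound is inherited by quotients: if $N$ is a closed $T$-invariant subspace and $\tilde T$ denotes the \op\ induced by $T$ on the separable Hilbert space $H/N$, then $||\tilde T^{n}||_{e}\le ||T^{n}||_{e}\le M$.

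First I would extract from a single application of Theorem \ref{th0} a \emph{whole} invariant subspace of non-\hy\ vectors, rather than just one vector. Apply the theorem to the uniformly closed unital algebra $\mathcal A$ generated by $T$ (identifying $H^{*}=H^{**}=H$) and to a closed ball $Q$ avoiding $0$. In alternative (A1) one gets $z\in Q$, hence $z\neq 0$, and a \nz\ vector $y$ with $|\pss{y}{A^{*}z}|\le C_{Q}||A||_{e}$ for every $A\in\mathcal A$; taking $A=T^{n}$ gives $\sup_{n}|\pss{T^{n}y}{z}|\le C_{Q}M<\infty$, so $\overline{\textrm{Orb}}(y,T)$ is contained in the proper slab $\{v:|\pss{v}{z}|\le C_{Q}M\}$ and $y$ is non-\hy. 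Since $T^{m}p(T)y=\sum_{i}c_{i}T^{m+i}y$ for $p=\sum_{i}c_{i}z^{i}$, the same bounds give $\sup_{m}|\pss{T^{m}p(T)y}{z}|<\infty$, so every vector of $\vect\{T^{n}y:n\ge 0\}$ is non-\hy; thus $HC(T)^{c}$ is dense in the \nz\ closed $T$-invariant subspace $\overline{\vect}\{T^{n}y:n\ge 0\}$. In alternative (A2) one gets $A_{0}\in\mathcal A$, $A_{0}\neq I$, with $A_{0}^{*}z_{0}=z_{0}$ for some $z_{0}\in Q$; since $A_{0}$ commutes with $T$, the subspace $\overline{\textrm{ran}}(A_{0}-I)$ is closed, $T$-invariant, \nz\ (as $A_{0}\neq I$) and proper (as $z_{0}\perp\overline{\textrm{ran}}(A_{0}-I)$), and all of its vectors are non-\hy. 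In either case $T$ admits a \nz\ closed invariant subspace in which $HC(T)^{c}$ is dense.

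To upgrade this to density in $H$ I would run a maximality argument. Let $\mathcal F$ be the family of closed $T$-invariant subspaces $N$ such that $HC(T)^{c}$ is dense in $N$. Then $\{0\}\in\mathcal F$, and the closure of the union of a chain in $\mathcal F$ again belongs to $\mathcal F$, so Zorn's lemma provides a maximal element $N_{0}$. If $N_{0}\neq H$, I apply the previous paragraph to $\tilde T$ on $H/N_{0}$ (legitimate by the quotient bound above; if $H/N_{0}$ is finite-dimensional no vector is \hy, so every vector qualifies) to produce a \nz\ closed $\tilde T$-invariant subspace $\tilde V\subseteq H/N_{0}$ in which the non-\hy\ vectors of $\tilde T$ are dense. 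Its preimage $V$ under the quotient map is closed, $T$-invariant, and strictly contains $N_{0}$. Since a vector whose $\tilde T$-orbit is not dense in $H/N_{0}$ lifts to a vector whose $T$-orbit is not dense in $H$, lifting approximants shows that $HC(T)^{c}$ is dense in $V$, contradicting the maximality of $N_{0}$. Hence $N_{0}=H$, that is, $HC(T)^{c}$ is dense.

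The main obstacle is exactly this last step: passing from the \emph{existence} of non-\hy\ vectors, which Theorem \ref{th0} hands over directly, to their \emph{density}. The two points that make the exhaustion work are that the condition $\sup_{n}||T^{n}||_{e}<\infty$ is stable under quotients by invariant subspaces, and that alternative (A1) yields not merely one vector but a full invariant subspace of non-\hy\ vectors through the slab estimate; without this second observation the dichotomy would only give one non-\hy\ vector per application and the maximality scheme could not get started.
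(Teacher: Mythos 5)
Your proof is correct, but it reaches density by a genuinely different mechanism from the paper's. The paper applies Theorem \ref{th0} not to the algebra generated by $T$ but to the uniformly closed algebra generated by $T^{*}$, and does so for \emph{every} closed ball $Q$ avoiding $0$: there alternative (A2) is ruled out outright --- since the algebra is commutative, every vector $A^{*}x_{0}$ of the dense set $\{A^{*}x_{0}\,;\ A\in\mathcal{A}\}$ is fixed by $A_{0}^{*}$, forcing $A_{0}=I$, a contradiction --- so (A1) always holds, and after dualization the localized vector $x^{*}\in Q$ is precisely the vector $y$ whose $T$-orbit satisfies $\sup_{n\ge 0}|\pss{x}{T^{n}y}|\le C_{Q}M$. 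Thus every ball contains a non-hypercyclic vector and density is immediate: no quotients, no Zorn's lemma. Because you ran the dichotomy on the algebra generated by $T$ itself, your (A1) produces the non-hypercyclic vector as the unlocalized $x^{**}$, and you compensate with exactly the three extra devices you list: the slab/span argument upgrading one vector to an invariant subspace of non-hypercyclic vectors, the constructive treatment of (A2) via $\overline{\mathrm{ran}}(A_{0}-I)$ (which the commutativity argument above shows is in fact vacuous for your algebra as well), and the maximality exhaustion through quotients. These all check out: in particular the quotient stability $\Vert\tilde{T}^{n}\Vert_{e}\le\Vert T^{n}\Vert_{e}$, which you assert without proof, does hold in the Hilbert space setting, since under the identification $H/N\cong N^{\perp}$ one has $\tilde{T}^{n}=P_{N^{\perp}}T^{n}|_{N^{\perp}}$ (using the $T$-invariance of $N$), and compression by $P_{N^{\perp}}$ sends compact operators to compact operators without increasing norms; and your lifting step correctly uses that the quotient map carries dense orbits to dense orbits, so non-hypercyclicity pulls back from $H/N_{0}$ to $H$. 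In summary, your route shows that the bare dichotomy, together with quotient-stability of the essential-norm bound, suffices --- at the cost of Zorn's lemma and several routine verifications --- whereas the paper's dual trick, placing the ball $Q$ on the side of the duality where the non-hypercyclic vector is wanted, obtains localization, and hence density, in a single application of Theorem \ref{th0} per ball.
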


\begin{proof}
By our assumption the \op\ $T$ can be decomposed as $T=B+K$, where
$\sup_{n\ge 0}||B^{n}||\le M<+\infty $ and $K$ a compact operator on $H$. Then $T^{n}$ is a compact perturbation of $B^{n}$ and $||T^{*n}||_{e}=||T^{n}||_{e}=||B^{n}||_{e}\le M$. Let $\mathcal{A}$ be the uniformly closed sub-algebra of $\mathcal{B}(H)$ generated by $T^{*}$.
Then for every closed ball $Q$ of $H$ with non-empty interior not containing $0$, either (A1) or (A2) of Theorem \ref{th0} above is satisfied.  But it is not difficult to see that (A2) can never hold here. Suppose indeed that (A2) is true: then there exist an \op\ $A_{0}\in \mathcal{A}$, not equal to the identity, and a vector $x_{0}\in Q$ such that $A_{0}^{*}x_{0}=x_{0}$. Since the algebra $\mathcal{A}$ is commutative, the equality $A^{*}x_{0}=A^{*}A_{0}^{*}x_{0}=A_{0}^{*}A^{*}x_{0}$  holds true for every $A\in \mathcal{A}$.
But $x_{0}$ belongs to $Q$, so the set $\{A^{*}x_{0} \textrm{ ; } A\in \mathcal{A}\}$ is dense in $H$ by (A2). Hence $A_{0}=I$, which is a contradiction. So (A2) cannot be true, and (A1) is satisfied: for every non-empty open ball $Q$ of $H$ not containing $0$,
there exist  a positive constant $C_{Q}$ and two \nz\ vectors $x,y\in H$ with $y\in Q$ such that
$\sup_{n\ge 0}|\pss{x}{T^{n}y}|\le C_{Q}\, M$. Such vectors $y$ cannot be \hy\ for $T$, so $T$ has a dense set of non-\hy\ vectors.
\end{proof}

The proof of Proposition \ref{Prop4} is now straightforward.

\begin{proof}[Proof of Proposition \ref{Prop4}] 
If $T$ is one of the \ops\ of \cite{GR}, let $(x_{\alpha })_{\alpha \in A}$ be the family of all vectors of $H$ which are non-\hy\ for $T$. If for each $\alpha \in A$ we denote by $M_{\alpha }$ the subspace $M_{\alpha }=\overline{\textrm{Orb}}(x_{\alpha },T) $, which is non-trivial, then 
\[
HC(T)^{c}=\bigcup_{\alpha \in A}M_{\alpha }.
\]
Since the subspaces $M_{\alpha }$ are totally ordered by inclusion, the set $HC(T)^{c}$ is a linear space. It remains to prove that it is dense in $H$. We have seen that $T$ can be decomposed as
$T=S+K$, where the weights $w_{j}$ of the shift are either $0$ or tend to $1$ very quickly as $j$ tends to infinity and $K$ is compact. Let $S_{0}$ be the weighted shift on $H$ defined by 
$S_{0}g_{j}=0$ if $\ w_{j}=0$ and $S_{0}g_{j}=g_{j+1}$ if $w_{j}>0$. Then $S-S_{0}=L$ is a compact \op. So $T=S_{0}+L+K$ is a compact perturbation of $S_{0}$, and $S_{0}$ is obviously  power-bounded since $||S_{0}||\le 1$. Hence $T$ has a dense set of non-\hy\ vectors by Corollary \ref{cor}. Since all invariant closed subsets of $H$ are automatically subspaces by property (P1) of Theorem \ref{Th3}, $T$ has  non-trivial invariant closed subspaces.
 \end{proof}
 
Thanks to Proposition \ref{Prop4}, it is possible to obtain a rather complete description of the structure of the set of non-\hy\ vectors for these operators.
\begin{proposition}\label{Prop6}
 If $T$ is one of the operators of \cite{GR} on a complex Hilbert space $H$, there exists an increasing sequence $(M_{n})_{n\ge 0}$ of infinite-dimensional closed subspaces of $H$ such that $M_{n}$ has infinite codimension as a subspace of $M_{n+1}$ for each $n\ge 0$, $\bigcup_{n\ge 0}M_{n}$ is dense in $H$, and $HC(T)^{c}=\bigcup_{n\ge 0}M_{n}$.
\end{proposition}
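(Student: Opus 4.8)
The plan is to build the sequence $(M_{n})$ directly out of the chain of orbit closures already produced in the proof of Proposition \ref{Prop4}. Recall from there that, writing $M_{\alpha }=\overline{\textrm{Orb}}(x_{\alpha },T)$ for the non-\hy\ vectors $x_{\alpha }$, one has $HC(T)^{c}=\bigcup_{\alpha }M_{\alpha }$, where by (P1) each $M_{\alpha }$ is a closed $T$-invariant subspace, by (P2) the family $\{M_{\alpha }\}$ is totally ordered by inclusion, and each $M_{\alpha }$ is proper (since $x_{\alpha }$ is non-\hy, $M_{\alpha }\neq H$). Since these \ops\ are \hy\ (their sets of non-\hy\ vectors being small), $HC(T)^{c}$ is a proper dense linear subspace of $H$; in particular it is not closed, and the chain $\{M_{\alpha }\}$ has no largest element.

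First I would record two consequences of property (P'). On the one hand, $T$ induces a \hy\ \op\ on each $M_{\alpha }$, and since no non-zero finite-dimensional space carries a \hy\ \op, every $M_{\alpha }$ is infinite-dimensional. On the other hand, if $M\subsetneq M'$ are two members of the chain, then $M$ is a proper closed $T$-invariant subspace of $M'$; passing to the quotient, $T$ induces on $M'/M$ a \hy\ \op\ (the image of a dense orbit under the open quotient map $M'\to M'/M$ is again dense), and the same remark forces $M'/M$ to be infinite-dimensional, i.e.\ $M$ has infinite codimension in $M'$. This is the key point: the infinite-codimension requirement is automatic for any strictly increasing chain of orbit closures, and does not need the fine structure of the construction in \cite{GR}.

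Next I would extract a countable cofinal subchain using separability. I would choose a countable dense subset $\{d_{k}\}_{k\ge 0}$ of $HC(T)^{c}$ and, for each $k$, a member $M_{\gamma _{k}}$ of the chain containing $d_{k}$. Then $\{M_{\gamma _{k}}\}_{k}$ is cofinal: if some $M_{\alpha }$ were not contained in any $M_{\gamma _{k}}$, then by total ordering $M_{\gamma _{k}}\subseteq M_{\alpha }$ for all $k$, whence $\{d_{k}\}\subseteq M_{\alpha }$ and, taking closures, $H=\overline{HC(T)^{c}}\subseteq M_{\alpha }$, contradicting that $M_{\alpha }$ is proper. Cofinality then yields $\bigcup_{k}M_{\gamma _{k}}=\bigcup_{\alpha }M_{\alpha }=HC(T)^{c}$.

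Finally I would turn the countable cofinal family into the desired sequence. Using the total order, I would replace $M_{\gamma _{k}}$ by the largest of $M_{\gamma _{0}},\dots ,M_{\gamma _{k}}$ to get an increasing sequence with the same union, then delete repetitions to obtain a strictly increasing sequence $(M_{n})_{n\ge 0}$; this sequence is genuinely infinite, for otherwise $HC(T)^{c}$ would equal its last term and hence be closed. By construction $\bigcup_{n}M_{n}=HC(T)^{c}$ is dense, each $M_{n}$ is infinite-dimensional by the first step, and each $M_{n}$ has infinite codimension in $M_{n+1}$ by the quotient argument. The only genuinely delicate point is the exact equality $\bigcup_{n}M_{n}=HC(T)^{c}$ (as opposed to mere density), which is precisely what the cofinality argument secures; everything else reduces to (P'), (P1)--(P2) and the elementary fact that \hy\ \ops\ live only on infinite-dimensional spaces.
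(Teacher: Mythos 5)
Your proposal is correct and follows essentially the same route as the paper's own proof: a countable dense subset of $HC(T)^{c}$ yields, via property (P2) and the density of $HC(T)^{c}$ established in Proposition \ref{Prop4}, a countable cofinal subchain of orbit closures whose union is exactly $HC(T)^{c}$, which is then refined into a strictly increasing sequence, with infinite dimension and infinite codimension of each $M_{n}$ in $M_{n+1}$ both deduced from property (P') through the same quotient argument. The only differences are cosmetic: you make explicit why the resulting sequence is genuinely infinite (since $HC(T)^{c}$ is dense and proper, hence not closed) and why hypercyclicity passes to the quotient $M_{n+1}/M_{n}$, two points the paper leaves implicit.
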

\begin{proof}
 With the notation used in the proof of Proposition \ref{Prop4} above, $HC(T)^{c}=\bigcup_{\alpha \in A}M_{\alpha }$ with $M_{\alpha }=\overline{\textrm{Orb}}(x_{\alpha },T)$. Let $(x_{\alpha _{k}})_{k\ge 0}$ be a  sequence of vectors of $HC(T)^{c}$ which is dense in $HC(T)^{c}$. Since $HC(T)^{c}$ is dense in $H$, this sequence is dense in $H$ as well. It is proved in \cite[Section 5.2]{GR} that 
$$
HC(T)^{c}=\bigcup_{k\ge 0}M_{\alpha_{k} }.
$$
Since the argument is simple enough, we recall it briefly here: let 
$x_{\alpha }\in HC(T)^{c}$. We wish to show that there exists a $k\ge 0$ such that $M_{\alpha }\subseteq M_{\alpha _{k}}$. Suppose that this is not the case. Then $M_{\alpha _{k}}\subseteq M_{\alpha }$ for each $k\ge 0$ by property (P2), so that $x_{\alpha _{k}}\in M_{\alpha }$ for each $k\ge 0$. Since $\{x_{\alpha _{k}} \textrm{ ; } k\ge 0\}$ is dense in $HC(T)^{c}$ and $M_{\alpha }$ is closed, it follows that the closure of $HC(T)^{c}$ is contained in $ M_{\alpha }$, which is an obvious contradiction with Proposition \ref{Prop4} and the fact that $M_{\alpha }\neq H$. Hence $HC(T)^{c}=\bigcup_{k\ge 0}M_{\alpha _{k}}$.
\par\medskip 
Set $M_{0}=M_{\alpha _{0}}$. Let then $k_{1}$ be the smallest integer such that $M_{0}\subsetneq M_{\alpha _{k_{1}}}$. Such an integer does exist because the subspaces $M_{\alpha _{k}}$ are totally ordered by inclusion and it is impossible that $M_{\alpha _{k}}\subseteq M_{0}$ for each $k\ge 1$. Set then $M_{1}=M_{\alpha _{k_{1}}}$. We have $M_{\alpha _{1}}\subseteq M_{0},\, \dots,\,M_{\alpha _{k_{1}-1}}\subseteq M_{0}$, so that 
$\bigcup_{k= 0}^{k_{1}}M_{\alpha _{k}}=M_{1}$. Continuing in this fashion, we construct by induction a strictly increasing sequence $(k_{n})_{n\ge 1}$ of integers such that, setting $M_{n}=M_{\alpha_{k_{n}}}$, $k_{n}$ is the smallest integer such that $M_{n-1}=M_{\alpha_{k_{n-1}}}\subsetneq M_{\alpha_{k_{n}}}=M_{n}$. With this definition we have as above $\bigcup_{k= 0}^{k_{n}}M_{\alpha _{k}}=M_{n}$, $M_{n}
\subsetneq M_{n+1}$, and  
$\bigcup_{k\ge 0}M_{\alpha _{k}}=\bigcup_{n\ge 0} M_{n}=HC(T)^{c}$.
\par\medskip 
Each subspace $M_{n}$ of $H$ is infinite-dimensional by property (P') and the fact that no \op\ on a finite-dimensional space admits a \hy\ vector. It remains to prove that for each $n\ge 0$, $M_{n}$ is a subspace of $M_{n+1}$ of infinite codimension in $M_{n+1}$: the \op\ $T_{|M_{n+1}}$ induced by $T$ on $M_{n+1}$ admits a \hy\ vector by property (P'), so the quotient \op\ $\overline{T}_{|M_{n+1}}$ on the non-zero quotient space $M_{n+1}/M_{n}$ admits a \hy\ vector as well. So $M_{n+1}/M_{n}$ is necessarily infinite-dimensional, and this proves our claim. 
\end{proof}
As a consequence of Proposition \ref{Prop6} we obtain:
\begin{proposition}\label{Prop7}
 Let $(E_{n})_{n\ge 0}$ be a sequence of closed subspaces of the Hilbert space $H$ having the following properties:
\begin{enumerate}
 \item [(1)] $E_{n}\subseteq E_{n+1}$ for each $n\ge 0$,
\item[(2)] each subspace $E_{n}$ is infinite-dimensional and has infinite codimension as a subspace of $E_{n+1}$,
\item[(3)] the union $\bigcup_{n\ge 0}E_{n}$ is dense in $H$.
\end{enumerate}
Let $T$ be one of the \ops\ of \cite{GR}, having the property given by Proposition \emph{\ref{Prop6}}. There exists a unitary operator $U$ on $H$ such that 
\[
HC(UTU^{-1})^{c}=\bigcup_{n\ge 0}E_{n}.
\]
\end{proposition}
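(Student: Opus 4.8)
The plan is to reduce the statement to a purely geometric matching of two flags of subspaces, using the fact that conjugating $T$ by a unitary transports its set of non-\hy\ vectors in the obvious way. First I would record that for any unitary $U$ on $H$ one has $HC(UTU^{-1})=U\bigl(HC(T)\bigr)$, and hence $HC(UTU^{-1})^{c}=U\bigl(HC(T)^{c}\bigr)$. Indeed $(UTU^{-1})^{n}=UT^{n}U^{-1}$, so $\textrm{Orb}(x,UTU^{-1})=U\,\textrm{Orb}(U^{-1}x,T)$; since $U$ is a homeomorphism of $H$, the closure of this orbit is $U\bigl(\overline{\textrm{Orb}}(U^{-1}x,T)\bigr)$, which equals $H$ precisely when $U^{-1}x\in HC(T)$. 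By Proposition \ref{Prop6} we may write $HC(T)^{c}=\bigcup_{n\ge 0}M_{n}$, where $(M_{n})_{n\ge 0}$ enjoys exactly the structural properties (1)--(3) assumed of $(E_{n})_{n\ge 0}$. It therefore suffices to produce a unitary $U$ on $H$ with $U(M_{n})=E_{n}$ for every $n\ge 0$, for then $HC(UTU^{-1})^{c}=\bigcup_{n\ge 0}U(M_{n})=\bigcup_{n\ge 0}E_{n}$.

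To build such a $U$, set $M_{-1}=E_{-1}=\{0\}$ and define the orthogonal ``layers'' $N_{n}=M_{n}\ominus M_{n-1}$ and $P_{n}=E_{n}\ominus E_{n-1}$ for $n\ge 0$, so that $M_{n}=\bigoplus_{k=0}^{n}N_{k}$ and $E_{n}=\bigoplus_{k=0}^{n}P_{k}$. Since the layers within each family are mutually orthogonal and $\bigcup_{n}M_{n}$, $\bigcup_{n}E_{n}$ are dense in $H$, the closed linear spans of $\bigcup_{k}N_{k}$ and of $\bigcup_{k}P_{k}$ are both equal to $H$, giving orthogonal decompositions $H=\bigoplus_{k\ge 0}N_{k}=\bigoplus_{k\ge 0}P_{k}$. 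The crucial point is that every layer is an infinite-dimensional separable Hilbert space: $N_{0}=M_{0}$ and $P_{0}=E_{0}$ are infinite-dimensional by hypothesis, while for $n\ge 1$ the dimension of $N_{n}$ equals the codimension of $M_{n-1}$ in $M_{n}$, which is infinite by Proposition \ref{Prop6}, and similarly for $P_{n}$ by (2). Hence for each $k$ there is a unitary $U_{k}:N_{k}\to P_{k}$, and $U=\bigoplus_{k\ge 0}U_{k}$ is a unitary on $H$ satisfying $U(M_{n})=\bigoplus_{k=0}^{n}U_{k}(N_{k})=\bigoplus_{k=0}^{n}P_{k}=E_{n}$ for all $n$, as desired.

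The entire content of the argument lies in the first reduction: once one sees that conjugating by $U$ merely applies $U$ to the set of non-\hy\ vectors, what remains is the standard fact that two orthogonal decompositions of $H$ into countably many infinite-dimensional pieces can be matched by a single unitary. I do not anticipate any real obstacle; the only points demanding a little care are verifying that all the layers are genuinely infinite-dimensional and that their closed spans fill $H$, both of which are immediate from properties (1)--(3) together with Proposition \ref{Prop6}.
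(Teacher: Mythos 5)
Your proof is correct and follows essentially the same route as the paper: both match the layers $M_{n}\ominus M_{n-1}$ with $E_{n}\ominus E_{n-1}$ (the paper by mapping an orthonormal basis of each layer to one of the corresponding layer, you by a direct sum of unitaries between layers, which is the same construction), and both rely on the layers being infinite-dimensional by Proposition \ref{Prop6} and property (2). Your only addition is making explicit the transport identity $HC(UTU^{-1})^{c}=U\bigl(HC(T)^{c}\bigr)$, which the paper uses implicitly.
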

\begin{proof}
 Let $T$ be one of these \ops, and let $(M_{n})_{n\ge 0}$ be the increasing sequence of subspaces such that $HC(T)^{c}=\bigcup_{n\ge 0}M_{n}$ given by Proposition \ref{Prop6}. Let $(g_{k,0})_{k\ge 0}$ be an orthonormal basis of $M_{0}$, and for each $n\ge 1$ let $(g_{k,n})_{k\ge 0}$ be an orthonormal basis of the orthogonal complement
 $M_{n}\ominus M_{n-1}$ of $M_{n-1}$ in $M_{n}$. If $(E_{n})_{n\ge 0}$ is a sequence of subspaces having the properties stated in Proposition \ref{Prop7}, let $(e_{k,0})_{k\ge 0}$ be an orthonormal basis of $E_{0}$ and $(e_{k,n})_{k\ge 0} $ an orthonormal basis of $E_{n}\ominus E_{n-1}$ for each $n\ge 1$. As each one of the two sets $\{g_{k,n} \textrm{ ; } n\ge 0,\ k\ge 0\}$ and $\{e_{k,n} \textrm{ ; } n\ge 0,\ k\ge 0\}$ span a dense subspace of $H$, each of them forms an orthonormal basis of $H$, and there exists a unitary operator  $U$ on $H$ such that $Ug_{k,\,n}=e_{k,\,n}$ for each $n\ge 0$ and $k\ge 0$. It follows that $U(M_{n})=E_{n}$ for each $n\ge 0$, and thus the \op\ $UTU^{-1}$ satisfies $HC(UTU^{-1})^{c}=\bigcup_{n\ge 0}E_{n}$.
\end{proof}

We are now ready for the proof of Theorem \ref{Th1}.

\section{Proof of Theorem \ref{Th1}}\label{Sec3}
The proof of Theorem \ref{Th1} relies on the following simple idea: let $(E_{n})_{n\ge 0}$ and $(F_{n})_{n\ge 0}$ be two sequences of closed subspaces of $H$ satisfying properties (1), (2), and (3) of Proposition \ref{Prop7}. Let $T\in\mathcal{B}(H)$ be one of the \ops\ of \cite{GR}, and let $U_{E}$ and $U_{F}$ be the two associated unitaries given by Proposition \ref{Prop7}: 
$$HC(U_{E}TU_{E}^{-1})^{c}=\bigcup_{n\ge 0}E_{n} \quad \textrm{and}\quad HC(U_{F}TU_{F}^{-1})^{c}=\bigcup_{n\ge 0}F_{n}.$$ If we manage to construct the sequences of subspaces $(E_{n})_{n\ge 0}$ and $(F_{n})_{n\ge 0}$ in such a way that 
$$
\Bigl( \bigcup_{n\ge 0}E_{n}\Bigr)\cap\Bigl( \bigcup_{n\ge 0}F_{n}\Bigr)=\{0\},
$$
then we will get that $HC(U_{E}TU_{E}^{-1})^{c}\cap HC(U_{F}TU_{F}^{-1})^{c}=\{0\}$. This means that 
$$HC(T)^{c}\cap HC(V^{-1}TV)^{c}=\{0\},$$ where $V=U_{F}^{-1}U_{E}$, i.e.\ that $HC(T)\cup HC(V^{-1}TV)=H\setminus \{0\}$, and this is exactly the statement of Theorem \ref{Th1}. So we see that everything boils down to showing the following lemma:
\begin{lemma}\label{Lem8}
 Let $H$ be a complex separable infinite-dimensional Hilbert space. There exist two sequences $(E_{n})_{n\ge 0}$ and $(F_{n})_{n\ge 0}$ of closed subspaces of $H$ which both satisfy assertions \emph{(1)}, \emph{(2)}, and \emph{(3)} of Proposition  \emph{\ref{Prop7}} and are such that
\[
\Bigl( \bigcup_{n\ge 0}E_{n}\Bigr)\cap\Bigl( \bigcup_{n\ge 0}F_{n}\Bigr)=\{0\}.
\]
\end{lemma}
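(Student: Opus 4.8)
The plan is to realize $H$ concretely and to obtain both sequences as transports, under a single invertible analytic multiplier, of one explicit ``polynomial degree'' filtration. Since any two complex separable infinite-dimensional Hilbert spaces are unitarily equivalent, I may assume $H=H^{2}(\D,\ell^{2})$, the space of $\ell^{2}$-valued analytic functions $f(z)=\sum_{k\ge 0}a_{k}z^{k}$ on $\D$ with $a_{k}\in\ell^{2}$ and $\norm{f}^{2}=\sum_{k\ge 0}\norm{a_{k}}^{2}_{\ell^{2}}<+\infty$. For $n\ge 0$ I set
\[
E_{n}=\Bigl\{\sum_{k=0}^{n}a_{k}z^{k}\ ;\ a_{0},\dots,a_{n}\in\ell^{2}\Bigr\},
\]
the subspace of $\ell^{2}$-valued polynomials of degree at most $n$. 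Each $E_{n}$ is the image of $(\ell^{2})^{n+1}$ under an isometric embedding, hence closed and infinite-dimensional; one has $E_{n}\subseteq E_{n+1}$; the orthogonal complement $E_{n+1}\ominus E_{n}=\{a_{n+1}z^{n+1}\}\cong\ell^{2}$ is infinite-dimensional, so the codimension is infinite; and $\bigcup_{n\ge 0}E_{n}$ is the space of all $\ell^{2}$-valued polynomials, which contains the finite linear combinations of the orthonormal basis vectors $z^{k}\varepsilon_{i}$ and is therefore dense. Thus $(E_{n})_{n\ge 0}$ satisfies (1), (2) and (3).

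The second sequence will be $F_{n}=M_{\phi}E_{n}$, where $M_{\phi}$ denotes multiplication by a fixed scalar function $\phi$ to be chosen in $H^{\infty}(\D)$. For the transport to preserve all the structural properties I need $M_{\phi}$ to be a topological isomorphism of $H$, which holds as soon as $\phi$ and $1/\phi$ both belong to $H^{\infty}(\D)$: then $M_{\phi}$ is bounded with bounded inverse $M_{1/\phi}$, it carries closed subspaces to closed subspaces and preserves dimensions and codimensions, and it maps the dense subspace $\bigcup_{n}E_{n}$ onto the dense subspace $\bigcup_{n}F_{n}$. Hence $(F_{n})_{n\ge 0}$ automatically inherits (1), (2) and (3) from $(E_{n})_{n\ge 0}$, and no separate verification is needed for it.

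Everything then reduces to arranging $M_{\phi}\bigl(\bigcup_{n}E_{n}\bigr)\cap\bigl(\bigcup_{n}E_{n}\bigr)=\{0\}$, i.e. to choosing $\phi$ so that it sends no nonzero $\ell^{2}$-valued polynomial to an $\ell^{2}$-valued polynomial; this is the heart of the argument and the only genuinely non-routine point. I intend to take $\phi(z)=e^{z}$, which satisfies $\phi,1/\phi=e^{-z}\in H^{\infty}(\D)$ and is not a rational function. The key elementary observation is that if $f=\sum_{k\le m}a_{k}z^{k}$ is a nonzero $\ell^{2}$-valued polynomial and $\phi f$ is again a polynomial, then, writing things out in each scalar coordinate $f^{(i)}(z)=\sum_{k}\pss{a_{k}}{\varepsilon_{i}}z^{k}$, each product $\phi f^{(i)}$ is a scalar polynomial; for any index $i$ with $f^{(i)}\neq 0$ this would force $\phi=(\phi f^{(i)})/f^{(i)}$ to be rational, a contradiction. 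Hence every $f^{(i)}$ vanishes, so $f=0$.

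Finally, any $g$ lying in both unions is of the form $g=M_{\phi}f$ with both $f$ and $g$ polynomials, whence $f=0$ and so $g=0$; this gives $\bigl(\bigcup_{n}E_{n}\bigr)\cap\bigl(\bigcup_{n}F_{n}\bigr)=\{0\}$, which is exactly the assertion of the lemma, and, through the reduction preceding it, Theorem \ref{Th1}. As indicated, the only real obstacle is the intersection condition; once the rationality observation above is isolated, the checking of (1)--(3) for both sequences is routine.
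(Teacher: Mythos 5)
Your proof is correct, and it takes a genuinely different route from the paper's for the second filtration. The first filtration is identical: the paper also works in $H^{2}(\D,\ell^{2})$ and takes $E_{n}=\bigoplus_{\ell_{2}}G_{n}$, where $G_{n}$ is the space of scalar polynomials of degree at most $n$ --- exactly your $\ell^{2}$-valued polynomials of degree at most $n$. But for $(F_{n})$ the paper sets $F_{n}=\bigoplus_{\ell_{2}}B_{n}H^{2}(\D)$, where $B_{n}$ is the Blaschke product with zeros $z_{j}=1-1/(j+1)^{2}$, $j\ge n$; the two nontrivial points there are the density of $\bigcup_{n}B_{n}H^{2}(\D)$, proved by estimating $|1-B_{n}(e^{i\theta})|$ to get $B_{n}\to 1$ a.e.\ on $\T$ and then applying dominated convergence, and the triviality of the intersection, which holds because a nonzero function in $B_{n}H^{2}(\D)$ has infinitely many zeros and hence is not a polynomial. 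You instead transport the polynomial filtration by the single invertible multiplier $M_{e^{z}}$ (legitimate since $e^{\pm z}\in H^{\infty}(\D)$, and invertibility --- not unitarity --- is all that properties (1)--(3) require, since they are purely lattice-theoretic), so closedness, infinite dimension and codimension, and density of $\bigcup_{n}F_{n}$ are inherited for free, and the disjointness reduces to the single fact that $e^{z}$ is not rational: if $f^{(i)}\neq 0$ and $e^{z}f^{(i)}$ are both scalar polynomials, then $e^{z}$ agrees on $\D$ with a rational function, hence (by the identity theorem --- worth one explicit sentence, since a priori the identity holds only on $\D$) is rational and entire, hence a polynomial, which $e^{z}$ is not, having no zeros. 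In short, you trade the paper's inner-function machinery and its dominated-convergence density computation for a soft transport-of-structure argument plus one elementary transcendence-type observation; your version is shorter and avoids all hard analysis, while the paper's version has the mild virtue of producing both filtrations already at the scalar $H^{2}(\D)$ level with one-dimensional successive gaps before tensoring with $\ell^{2}$ --- though your multiplier trick would work verbatim there too, with $K_{n}=e^{z}G_{n}$.
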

\begin{proof}
 Let us first work in the Hardy space $H^{2}(\D)$, and consider for $n\ge 0$ the spaces $G_{n}=\vect[1,z,\dots,z^{n}]$ of polynomials of degree at most $n$. Of course, $G_{n}\subseteq G_{n+1}$ and $\dim(G_{n+1}\ominus G_{n})=1$ for each $n\ge 0$, and $\bigcup _{n\ge 0}G_{n}$ is dense in $H^{2}(\D)$. Let now $(z_{j})_{j\ge 0}$ be the sequence of points of $\D$ defined by $z_{j}=1-1/(j+1)^{2}$ for $j\ge 0$. For each $n\ge 0$, consider the Blaschke product $B_{n}$ given by 
$$
B_{n}(z)=\prod_{j\ge n}\dfrac{z_{j}-z}{1-z_{j}z}\quad (z\in\D)
$$
and the subspaces $K_{n}=B_{n}H^{2}(\D)$ of $H^{2}(\D)$. This is an increasing sequence of infinite-dimensional closed subspaces of $H^{2}(\D)$, and $\dim(K_{n+1}\ominus K_{n})=1$ for each $n\ge 0$. Let us now show that the increasing union $\bigcup_{n\ge 0}K_{n}$ is dense in $H^{2}(\D)$. Let $f\in H^{2}(\D)$ be a function which is orthogonal to all subspaces $K_{n}$. Denote by $L^{2}_{-}(\T)$ the set of functions $g$ in $L^{2}(\T)$ such that $\wh{f}(j)=0$ for each $j\ge 0$. Saying that f belongs to $H^{2}(\D)\ominus B_{n}H^{2}(\D)$ means that $f\in H^{2}(\D)\cap B_{n}L^{2}_{-}(\T)$. So  there exists for each $n\ge 0$ a function $g_{n}\in L^{2}_{-}(\T)$ such that $f=B_{n}g_{n}$. This means that $f(e^{i\theta })=B_{n}(e^{i\theta })g_{n}(e^{i\theta })$ for almost every $e^{i\theta }\in\T$, and hence there exists a subset $\Omega $ of $\T$ of measure $1$ such that for every $e^{i\theta }\in\Omega $ and every $n\ge 0$, $f(e^{i\theta })=B_{n}(e^{i\theta })g_{n}(e^{i\theta })$. Now the estimates
\begin{align*}
 |1-B_{n}(e^{i\theta })|&\le \sum_{j\ge n}\,\Bigl| \dfrac{z_{j}-e^{i\theta }}{1-z_{j}e^{i\theta }}-1\Bigr|\le\sum_{j\ge n}(1-z_{j})\, \dfrac{2}{|1-z_{j}e^{i\theta }|}\\
&\le\dfrac{2}{|1-e^{i\theta }|-\frac{1}{(n+1)^{2}}}\, \sum_{j\ge n}\dfrac{1}{(j+1)^{2}},\quad e^{i\theta }\in\T\setminus\{1\}
\end{align*}
show that whenever $e^{i\theta }\in\T\setminus\{1\}$, $B_{n}(e^{i\theta })$ tends to $1$ as ${n\to+\infty }$. It follows that for every $e^{i\theta }\in\Omega \setminus \{1\}$, $g_{n}(e^{i\theta })$ tends to $f(e^{i\theta })$ as ${n\to+\infty }$. Since $|g_{n}(e^{i\theta })|=|f(e^{i\theta })|$ almost everywhere on $\T$ and $g_{n}\in L_{-}^{2}(\T)$ for each $n\ge 1$, by Lebesgue dominated convergence theorem we obtain that $f\in L^{2}_{-}(\T)$. But $f$ belongs to $H^{2}(\D)$ as well, and so $f=0$. We have thus proved that $\bigcup _{n\ge 0}K_{n}$ is dense in $H^{2}(\D)$. 
\par\medskip 
The next step of the proof is to show that $$\bigl( \bigcup_{n\ge 0}G_{n}\bigr)\cap\bigl( \bigcup_{n\ge 0}K_{n}\bigr)=\{0\}.$$ This is straightforward: if $f$ belongs to one of the spaces $K_{n}=B_{n}H^{2}(\D)$, $f$ vanishes at all the points $z_{j}$ for $j\ge n$, and $f$ has infinitely many zeroes. Hence $f$ cannot be a polynomial unless it vanishes identically.
\par\medskip 
Consider now the space $H=\bigoplus_{\ell_{2}}H^{2}(\D)$ which is the $\ell_{2}$-sum of countably many copies of $H^{2}(\D)$. This space can also be seen as the vector-valued $H^{2}$-space $H^{2}(\D,\ell_{2})$. Set, for each $n\ge 0$, $E_{n}=\bigoplus_{\ell_{2}}G_{n}$ and $F_{n}=\bigoplus_{\ell_{2}}K_{n}$. Obviously $E_{n}$ and $F_{n}$ are infinite-dimensional, $E_{n}\subseteq E_{n+1}$ and $F_{n}\subseteq F_{n+1}$ for each $n\ge 0$. Since $\dim (G_{n+1}\ominus G_{n})=\dim(K_{n+1}\ominus K_{n})=1$, it is clear that $E_{n}$ and $F_{n}$ are of infinite codimension in $E_{n+1}$ and $F_{n+1}$ respectively. Lastly, since $\bigcup _{n\ge 0}G_{n}$ and $\bigcup _{n\ge 0}K_{n}$ are both dense in $H^{2}(\D)$, $\bigcup _{n\ge 0}E_{n}$ and $\bigcup _{n\ge 0}F_{n}$ are dense in $H$. So (1), (2), and (3) of Proposition \ref{Prop7} are satisfied. It remains to see that $E_{n}\cap F_{m}=\{0\}$ for each $m, n\ge 0$, but this follows directly from the fact that $G_{n}\cap K_{m}=\{0\}$. This finishes the proof of Lemma \ref{Lem8}.
\end{proof}

Theorem \ref{Th1} is proved.
\par\smallskip
It is interesting to note  the following consequence of 
Theorem \ref{Th1} and the Lomonosov inequality:

\begin{proposition}
The two operators $T$ and $V^{-1}TV$ given by Theorem \ref{Th1}  are unitarily equivalent, and  the weakly (strongly) closed algebra $\mathcal{A}$ they generate is equal to $\mathcal{B}(H)$. The uniformly closed algebra $\mathcal{R}$ they generate contains all compact \ops\ on $H$. 
\end{proposition}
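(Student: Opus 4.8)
The plan is to treat the three assertions separately, the first being immediate and the other two resting on Theorem \ref{Th1} together with the Lomonosov inequality in the forms recalled above. Since $V$ is unitary, the identity $V^{-1}TV=V^{-1}T(V^{-1})^{-1}$ exhibits $V^{-1}TV$ as the conjugate of $T$ by the unitary $V^{-1}$, so $T$ and $V^{-1}TV$ are unitarily equivalent by definition; this settles the first assertion. For the two others the key quantitative input is that $\|T^{n}\|_{e}\le 1$ for every $n\ge 0$: exactly as in the proof of Proposition \ref{Prop4} one writes $T=S_{0}+C$ with $C$ compact and $S_{0}$ a shift of norm $\le 1$, so that $T^{n}-S_{0}^{n}$ is compact and $\|T^{n}\|_{e}=\|S_{0}^{n}\|_{e}\le\|S_{0}\|^{n}\le 1$. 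As $V$ is unitary, the same bound holds for $V^{-1}T^{n}V=(V^{-1}TV)^{n}$.

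For the second assertion I would argue by contradiction: suppose the weakly closed algebra $\mathcal{A}$ generated by $T$ and $V^{-1}TV$ is not all of $\mathcal{B}(H)$. Applying the Lomonosov inequality (with $X=H$, so that $X^{**}=X^{*}=H$) to $\mathcal{A}$ produces nonzero vectors $x,y\in H$ with $|\langle Ax,y\rangle|\le\|A\|_{e}$ for every $A\in\mathcal{A}$. Taking $A=T^{n}$ gives $|\langle T^{n}x,y\rangle|\le\|T^{n}\|_{e}\le 1$ for all $n$, so the scalars $\langle T^{n}x,y\rangle$ stay bounded and $x\notin HC(T)$. Taking $A=(V^{-1}TV)^{n}=V^{-1}T^{n}V$ and using unitarity of $V$ yields $|\langle T^{n}(Vx),Vy\rangle|\le 1$ for all $n$, so likewise $Vx\notin HC(T)$. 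Since $x\neq 0$, this contradicts Theorem \ref{Th1}, according to which one of $x$, $Vx$ is hypercyclic for $T$; hence $\mathcal{A}=\mathcal{B}(H)$.

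For the third assertion I would first record that the uniformly closed algebra $\mathcal{R}$ generated by $T$ and $V^{-1}TV$ is transitive: a norm-closed subspace is weakly closed, so a subspace invariant under $T$ and $V^{-1}TV$ is invariant under every weak limit of operators leaving it invariant, hence under the whole weakly closed algebra $\mathcal{A}=\mathcal{B}(H)$, and is therefore $\{0\}$ or $H$. By the classical fact that a transitive norm-closed subalgebra of $\mathcal{B}(H)$ containing a nonzero compact operator already contains all of $\mathcal{K}(H)$ (see \cite{RR}), it then suffices to exhibit one nonzero compact operator in $\mathcal{R}$. To this end I would apply Theorem \ref{th0} to the uniformly closed algebra $\mathcal{R}^{*}=\{B^{*}:B\in\mathcal{R}\}$, generated by $T^{*}$ and $V^{-1}T^{*}V$, and to any closed ball $Q\subset H$ of positive radius avoiding $0$. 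Exactly as in the proof of Corollary \ref{cor}, alternative (A1) would furnish nonzero $x,y$ with $|\langle T^{n}y,x\rangle|\le C_{Q}$ and $|\langle T^{n}(Vy),Vx\rangle|\le C_{Q}$ for all $n$, making neither $y$ nor $Vy$ hypercyclic and contradicting Theorem \ref{Th1}; so alternative (A2) must hold.

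The heart of the matter, and the step I expect to require the most care, is extracting a genuine nonzero compact operator lying in $\mathcal{R}$ itself from alternative (A2). That alternative provides an operator $A_{0}\in\mathcal{R}^{*}$, distinct from the identity, whose adjoint $W=A_{0}^{*}\in\mathcal{R}$ admits $1$ as an isolated eigenvalue of finite multiplicity (this is precisely what the proof of Theorem \ref{th0}, via Lemma $8$ of \cite{L}, yields). Let $P$ be the associated Riesz spectral projection, which is of finite rank. By the holomorphic functional calculus $P$ is a norm limit of polynomials in $W$, whence $P\in\mathbb{C}I+\mathcal{R}$; writing $P=c_{0}I+R$ with $R\in\mathcal{R}$ one gets that $WP=c_{0}W+WR$ belongs to $\mathcal{R}$. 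Since $W$ restricts to an invertible operator on the finite-dimensional range of $P$ (it acts there as the identity plus a nilpotent), $WP$ is a nonzero finite-rank operator; indeed $WPx_{0}=x_{0}$ for the eigenvector $x_{0}$. Thus $\mathcal{R}$ contains the nonzero compact operator $WP$, and the transitivity established above forces $\mathcal{R}\supseteq\mathcal{K}(H)$, completing the proof. The delicate point throughout is the possible non-unitality of $\mathcal{R}$, which is exactly why one multiplies the projection by $W$ rather than trying to place $P$ itself in $\mathcal{R}$.
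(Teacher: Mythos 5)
Your treatment of the first two assertions is correct and coincides with the paper's: unitary equivalence is immediate, and your contradiction argument for $\mathcal{A}=\mathcal{B}(H)$ (Lomonosov's inequality together with $\|T^{n}\|_{e}=\|S_{0}^{n}\|_{e}\le 1$ and Theorem \ref{Th1}) is exactly the published proof. For the third assertion, however, you diverge from the paper, which simply invokes Theorem 3 of \cite{L} --- a dichotomy whose second branch is already ``$\mathcal{R}$ contains all compact operators'' --- and rules out the first branch as before. You instead try to re-derive that theorem from Theorem \ref{th0}: your transitivity step is sound (Mazur's theorem makes norm-closed subspaces weakly closed, so invariant subspaces of $\mathcal{R}$ are invariant under $\mathcal{A}=\mathcal{B}(H)$), and the classical fact that a transitive norm-closed algebra containing one nonzero compact operator contains all of them is indeed in \cite{RR}. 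But the extraction of a compact operator from alternative (A2) contains a genuine gap.

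The flawed step is the claim that ``by the holomorphic functional calculus $P$ is a norm limit of polynomials in $W$, whence $P\in\mathbb{C}I+\mathcal{R}$.'' The Riesz projection $P=f(W)$ corresponds to the function $f$ equal to $1$ near the isolated point and $0$ on the rest of the spectrum, and approximating such an $f$ by polynomials uniformly on a neighbourhood of $\sigma(W)$ requires control of the global geometry of the spectrum (Runge/Oka--Weil: one needs, roughly, the point $1$ not to be shielded by other spectrum), which nothing in (A2) provides. Concretely, take $W=2U\oplus 1$ on $H\oplus\mathbb{C}$ with $U$ the bilateral shift: then $\sigma(W)=2\mathbb{T}\cup\{1\}$, so $1$ is an isolated simple eigenvalue with rank-one Riesz projection $P=0\oplus 1$, yet for every polynomial $p$ one has $\|p(W)-P\|=\max\bigl(\sup_{|z|=2}|p(z)|,\,|p(1)-1|\bigr)\ge 1/2$, since by the maximum principle $|p(1)|\le\sup_{|z|=2}|p(z)|$; hence $P$ is \emph{not} a norm limit of polynomials in $W$ (constant terms included). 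The spectral data quoted in the proof of Theorem \ref{th0} --- note, by the way, that the ``isolated eigenvalue of finite multiplicity'' clause appears only in that proof sketch, not in the statement of (A2) itself --- does not exclude such a configuration for $A_{0}^{*}$, so your conclusion $P\in\mathbb{C}I+\mathcal{R}$, and with it $WP\in\mathcal{R}$, is unjustified. Repairing this would require finer information from Lomonosov's construction (control of the whole spectrum of $A_{0}^{*}$, not just of the point $1$), i.e.\ essentially re-proving Theorem 3 of \cite{L}; the paper sidesteps the entire difficulty by citing that theorem directly.
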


\begin{proof}
 First recall that the closures of a linear subspace of $\mathcal{B}(H)$ for the weak and strong topologies always coincide, so that we can indifferently consider $\mathcal{A}$ as the weak or strong closure of the algebra generated by $T$ and $V^{-1}TV$. Suppose that $\mathcal{A}\not = \mathcal{B}(H)$. Then we can apply the Lomonosov inequality, and obtain two non-zero vectors $x$ and $y$ of $H$ such that $|\langle x
, Ay\rangle|\le||A||_{e}$ for every $A\in \mathcal{A}$. So in particular we get that for every $n\ge 0$,
$|\langle x
, T^{n}y\rangle|\le||T^{n}||_{e}=||S_{0}^{n}||_{e}\le 1$, and $|\langle x
,  V^{-1}T^{n}Vy\rangle|\le||V^{-1}T^{n}V||_{e}=||V^{-1}S_{0}^{n}V||_{e}\le 1$. This contradicts the fact, given by Theorem \ref{Th1}, that either $y$ or $Vy$ has a dense orbit under the action of $T$. The statement about $\mathcal{R}$ is proved in exactly the same way, using \cite[Th. $3$]{L} which states that either there exist two non-zero vectors $x$ and $y$ of $H$ such that $|\langle x
, Ry\rangle|\le||R||_{e}$ for every $R\in \mathcal{R}$, or $\mathcal{R}$ contains all compact \ops\ on $H$.
\end{proof}

 There are of course much simpler examples of pairs $(R,S)$ of operators on $H$ which generate $\mathcal{B}(H)$: as was pointed out to us by Lyudmila Turowska, it suffices to take for $S$ the forward shift with respect to an orthonormal basis $(g_{j})_{j\ge 0}$ of $H$, and for $R$ the backward shift with respect to this same basis. Then $RS-SR$ is the rank one operator $\langle g_{0}\,,\, .\,\rangle g_{0}$, and from this one easily sees that the algebra generated by $R$ and $S$ is weakly (strongly) dense in $\mathcal{B}(H)$. This is a particular case of a result of \cite{MZ}, where it is shown that for any separable real or complex Banach space $X$, $\mathcal{B}(X)$ can be generated by two operators $R$ and $S$. Finding a minimal number of operators belonging to a certain subclass of $\mathcal{B}(H)$ and generating $\mathcal{B}(H)$ is a problem which has been much studied. We refer the reader for instance to the paper \cite{D}, where it is proved that $\mathcal{B}(H)$ can be generated by two unitary operators, and by no less than three projections, and to \cite{MZ} and the references therein.

\par\bigskip

\textbf{Acknowledgement:} We are grateful to Lyudmila Turowska for the remark above, and to Catalin Badea for pointing out references \cite{CTV}, \cite{D} and \cite{MZ}.

\end{document}